\documentclass[11pt,a4paper]{article}
\usepackage[cp1251]{inputenc}
\usepackage{amsmath,amsthm}
\usepackage{amsfonts,amstext,amssymb,verbatim,epsfig}
\usepackage{dsfont}
\usepackage{enumerate}

\oddsidemargin -8mm
\evensidemargin -8mm
\topmargin -4mm
\textheight 650pt
\textwidth 500pt

\sloppy 
\def\R{{\mathbb{R}}}

\def\Z{{\mathbb{Z}}}

\newcommand{\EE}{\mathbb{E}}

\newcommand{\GG}{\mathbb{G}}

\newcommand{\ld}{l(d)} %scale for cascading events.

\def\berbond{{\widetilde{\mathcal{B}}}} %Bernoulli bond percolation
\def\intgraph{{\widetilde{\mathcal{I}}}} %Interlacement graph
\def\gengraph{{\widetilde{J}}} %A general subgrapg of $\Z^d$
\def\gengraphrand{{\widetilde{\mathcal{J}}}} %A general random subgrapg of $\Z^d$
\def\edge{{\widetilde{e}}}%an edge of the lattice Z^d
\def\badseed{{\overline G}}%bad seed event
%bad event which is a combination of all bad events 

\newtheorem{theorem}{Theorem}%[section]
\newtheorem{corollary}{Corollary}%[section]
\newtheorem{lemma}{Lemma}%[section]
%[section]

\newtheoremstyle{likedef}
  {}%
  {}%
  {}%
  {\parindent}%
  {\bfseries}%
  {.}%
  {.5em}%
  {}%

\theoremstyle{likedef}
\newtheorem{definition}{Definition}[section]
\newtheorem{remark}{Remark}%[section]
%[section]

\numberwithin{equation}{section}

\begin{document}

\title{The effect of small quenched noise on connectivity properties\\ of random interlacements}

\author{Bal\'azs R\'ath\thanks{ETH Z\"urich, Department of Mathematics, R\"amistrasse 101, 8092 Z\"urich. Email: balazs.rath@math.ethz.ch and artem.sapozhnikov@math.ethz.ch.
The research of both authors has been supported by the grant ERC-2009-AdG  245728-RWPERCRI.}
\and
Art\"{e}m Sapozhnikov\footnotemark[1]
}

\maketitle

\footnotetext{MSC2000: Primary 60K35, 82B43.}
\footnotetext{Keywords: Random interlacement; Bernoulli percolation; slab; vacant set; quenched noise; long-range correlations; transience.}

\begin{abstract}
Random interlacements (at level $u$) is a one parameter 
family of random subsets of $\Z^d$ introduced by Sznitman in \cite{SznitmanAM}.
The vacant set at level $u$ is the complement of the random interlacement at level $u$. 
While the random interlacement induces a connected subgraph of $\Z^d$ for all levels $u$, the 
vacant set has a non-trivial phase transition in $u$, as shown in \cite{SznitmanAM} and \cite{Sznitman_Sidoravicius_cpam}. 

In this paper, we study the effect of small quenched noise on connectivity properties of 
the random interlacement and the vacant set.
For a positive $\varepsilon$, we allow each vertex of the random interlacement (referred to as occupied) to become vacant, 
and each vertex of the vacant set to become occupied with probability $\varepsilon$, 
independently of the randomness of the interlacement, and independently for different vertices.
We prove that for any $d\geq 3$ and $u>0$, almost surely, 
the perturbed random interlacement percolates for small enough noise parameter $\varepsilon$. 
In fact, we prove the stronger statement that Bernoulli percolation on the random interlacement graph 
has a non-trivial phase transition in wide enough slabs. 
As a byproduct, we show that any electric network with i.i.d. positive resistances on the interlacement graph is transient, 
which strengthens our result in \cite{RS:Transience}. 
As for the vacant set, we show that for any $d\geq 3$, there is still a non-trivial phase transition in $u$ 
when the noise parameter $\varepsilon$ is small enough, and we give explicit upper and lower bounds on the 
value of the critical threshold, when $\varepsilon\to 0$.  
\end{abstract}

\section{Introduction}
\noindent

The model of random interlacements was recently introduced by Sznitman in \cite{SznitmanAM} 
in order to describe the local picture left by the trajectory of a random walk on
the discrete torus $(\Z/N\Z)^d$, $d \geq 3$ when it runs up to times of order $N^d$,
or on the discrete cylinder $(\Z/N\Z)^d \times \Z$ , $d \geq 2$, 
when it runs up to times of order $N^{2d}$, see \cite{sznitman_cylinders}, \cite{windisch_torus}. 
Informally, the random interlacement Poisson point process consists of a countable collection of doubly infinite trajectories on $\Z^d$, 
and the trace left by these trajectories on a finite subset of $\Z^d$ ``looks like'' the trace of the above mentioned random walks. 

The set of vertices visited by at least one of these trajectories is the random interlacement at 
level $u$ of Sznitman \cite{SznitmanAM}, and the complement of this set is the vacant set at level $u$. 
These are one parameter families of translation invariant, ergodic, long-range correlated random subsets of $\Z^d$, 
see \cite{SznitmanAM}. 
We call the vertices of the random interlacement occupied, and the vertices of the vacant set vacant. 
While the set of occupied vertices induces a connected subgraph of $\Z^d$ for all levels $u$, the 
graph induced by the set of vacant vertices has a non-trivial phase transition in $u$, as shown in \cite{SznitmanAM} and \cite{Sznitman_Sidoravicius_cpam}. 

The effect of introducing a small amount of quenched disorder into a system with long-range correlations 
on the phase transition has got a lot of attention (see, e.g., \cite{Halperin_Weinrib}, \cite{Weinrib}, \cite{CCFS86}, \cite{CCFS89}). 
In this paper we consider how small quenched disorder affects the connectivity properties of the random interlacement and the vacant set. 
For $\varepsilon>0$, given a realization of the random interlacement, 
we allow each vertex independently to switch from occupied to vacant and from vacant to occupied with probability $\varepsilon$, 
and we study the effect it has on the existence of an infinite connected component in the graphs of occupied or vacant vertices. 

We prove that for any $d\geq 3$ and $u>0$, almost surely, the set of occupied vertices percolates for small enough noise parameter $\varepsilon$. 
In fact, we prove the stronger statement that Bernoulli percolation on the random interlacement graph 
has a non-trivial phase transition in wide enough slabs. 
The two main ingredients of our proof are a strong connectivity lemma for the interlacement graph proved in
 \cite{RS:Transience} and Sznitman's decoupling inequalities from \cite{Sznitman:Decoupling}.
As a byproduct, we show that any electric network with i.i.d. positive resistances on the interlacement graph is transient, 
which strengthens our result in \cite{RS:Transience}. 

We also prove that for any $d\geq 3$, 
the set of vacant vertices still undergoes a non-trivial phase transition in $u$ when the noise parameter $\varepsilon$ is small enough, 
and give explicit upper and lower bounds on the value of the threshold, when $\varepsilon\to 0$. 
The bounds that we derive suggest that the vacant set phase transition is robust with respect to noise, 
which we state as a conjecture.

\subsection{The model}\label{sec:model}
\noindent

For $x\in\Z^d$, $d\geq 3$, let $P_x$ be the law of a simple random walk $X$ on $\Z^d$ with $X(0) = x$. 
Let $K$ be a finite subset of $\Z^d$. The equilibrium measure of $K$ is defined by 
\[
 e_K(x) = P_x\left[X(t)\notin K~\mbox{for all}~t\geq 1\right] ,\quad \mbox{for }x\in K ,\
\]
and $e_K(x) = 0$ for $x\notin K$. 
The capacity of $K$ is the total mass of the equilibrium measure of $K$:
\[
 \mathrm{cap}(K) = \sum_x e_K(x) .\
\]
Since $d\geq 3$, for any finite set $K\subset \Z^d$, the capacity of $K$ is positive. 
Therefore, we can define the normalized equilibrium measure by 
\[
 \widetilde e_K(x) = e_K(x)/\mathrm{cap}(K) .\
\]

Let $W$ be the space of doubly-infinite nearest-neighbor trajectories in $\Z^d$ ($d\geq 3$) which tend to infinity at positive and negative infinite times, and 
let $W^*$ be the space of equivalence classes of trajectories in $W$ modulo time-shift. 
We write $\mathcal W$ for the canonical $\sigma$-algebra on $W$ generated by the coordinate maps, and 
$\mathcal W^*$ for the largest $\sigma$-algebra on $W^*$ for which the canonical map $\pi^*$ from $(W,\mathcal W)$ to $(W^*,\mathcal W^*)$ is measurable. 

Let $\mu$ be a Poisson point measure on $W^*$. 
For a finite subset $K$ of $\Z^d$, denote by $\mu_K$ the restriction of $\mu$ to the set of trajectories from $W^*$ that intersect $K$, and 
by $N_K$ be the number of trajectories in $\mathrm{Supp}(\mu_K)$. 
The point measure $\mu_K$ can be written as $\mu_K = \sum_{i=1}^{N_K}\delta_{\pi^*(X_i)}$, 
where $X_i$ are doubly-infinite trajectories from $W$ parametrized in such a way that 
$X_i(0) \in K$ and $X_i(t) \notin K$ for all $t<0$ and for all $i\in\{1,\ldots,N_K\}$. 

For $u>0$, we say that a Poisson point measure $\mu$ on $W^*$ has distribution $\mathrm{Pois}(u,W^*)$ if the following properties hold: 
\begin{itemize}
\item[(1)]\label{distr:1}
The random variable $N_K$ has Poisson distribution with parameter $u\mathrm{cap}(K)$. 
\item[(2)]\label{distr:2}
Given $N_K$, the points $X_i(0)$, $i\in\{1,\ldots,N_K\}$, are independent and distributed according to the normalized equilibrium measure on $K$. 
\item[(3)]\label{distr:3}
Given $N_K$ and $(X_i(0))_{i=1}^{N_K}$, the corresponding forward and backward paths are conditionally independent, $(X_i(t), t\geq 0)_{i=1}^{N_K}$
 are distributed as independent
 simple random walks, and $(X_i(t), t\leq 0)_{i=1}^{N_K}$ are distributed as independent random walks conditioned on not hitting $K$.
\end{itemize}
Properties (1)-(3) uniquely define $\mathrm{Pois}(u,W^*)$, as proved in Theorem~1.1 in \cite{SznitmanAM}. 
In fact, Theorem~1.1 in \cite{SznitmanAM} gives a coupling of 
the Poisson point measures $\mu(u)$ with distribution $\mathrm{Pois}(u,W^*)$ for all $u>0$.
We refer the reader to \cite{SznitmanAM} for more details. 

Let $\EE^d$ be the set of edges of $\Z^d$, i.e., $\EE^d = \{ \{x,y\}~:~x,y\in\Z^d, |x-y|_1 = 1\}$. 
We will use the following convention throughout the paper. 
For a subset $J$ of $\EE^d$, the subgraph of the lattice $(\Z^d,\EE^d)$ with the vertex set $\Z^d$ and the edge set $J$ will be also denoted by $J$. 

For a Poisson point measure $\mu$ with distribution $\mathrm{Pois}(u,W^*)$, 
the {\it random interlacement } $\mathcal I^u = \mathcal I^u(\mu)$ (at level $u$) is defined in \cite{SznitmanAM} as the set of vertices of $\Z^d$ 
visited by at least one of the trajectories from $\mathrm{Supp}(\mu)$. 
This is a translation invariant and ergodic random subset of $\Z^d$, as shown in \cite[Theorem~2.1]{SznitmanAM}. 
The law of $\mathcal I^u$ is characterized by the identity (see (0.10) and Remark~2.2 (2) in \cite{SznitmanAM}):
\[
\mathbb P\left[\mathcal I^u\cap K = \emptyset\right] = e^{-u \mathrm{cap}(K)} , \quad \text{for all finite $K\subseteq \Z^d$} .\
\]
We denote by $\intgraph^u = \intgraph^u(\mu)$ the set of edges of $\EE^d$ traversed by at least one of the trajectories from $\mathrm{Supp}(\mu)$. 
The corresponding random subgraph $\intgraph^u$ of $(\Z^d,\EE^d)$ (with the vertex set $\Z^d$ and the edge set $\intgraph^u$) 
is called the {\it random interlacement graph} (at level $u$). 
It follows from Theorem~2.1 and Remark~2.2(4) of \cite{SznitmanAM}  
that $\intgraph^u$ is a translation invariant ergodic random subgraph of $(\Z^d,\EE^d)$. 
Let $\mathcal V^u = \Z^d\setminus\mathcal I^u$ be the {\it vacant set} at level $u$. 

\medskip

Given a parameter $\varepsilon\in(0,1)$, we consider the family $\theta_x$, $x\in\Z^d$, 
of independent Bernoulli random variables (an independent noise) with parameter $\varepsilon$, and define 
$\varepsilon$-disordered analogues of the random interlacement $\mathcal I^{u,\varepsilon}$ and the vacant set $\mathcal V^{u,\varepsilon}$ as follows. 
We say that $x\in \mathcal I^{u,\varepsilon}$ if $x\in\mathcal I^u$ and $\theta_x = 0$ or $x\in \mathcal V^u$ and $\theta_x = 1$. 
In other words, the vertices of the random interlacement get an $\varepsilon$-chance to become vacant, and the vertices of the vacant set 
get an $\varepsilon$-chance to become occupied. Let $\mathcal V^{u,\varepsilon} = \Z^d\setminus\mathcal I^{u,\varepsilon}$. 
We are interested in percolative properties of $\mathcal I^{u,\varepsilon}$ and $\mathcal V^{u,\varepsilon}$. 
It follows from Remark~1.6(4) in \cite{SznitmanAM} that for any $d\geq 3$ and $u>0$,
\[
\mbox{cov}_u\left[\mathds{1}(x\in\mathcal V^u), \mathds{1}(y\in\mathcal V^u)\right] \asymp 
\left(1+|x-y|_\infty\right)^{2-d},\quad \mbox{for }x,y\in\Z^d ,\
\]
where $\mbox{cov}_u$ denotes the covariance under $\mathrm{Pois}(u,W^*)$. This displays the presence of long-range correlations in $\mathcal V^u$. 
Non-rigorous study of the effect of small quenched noise on the critical behavior of a system with long-range correlations 
was initiated in \cite{Halperin_Weinrib, Weinrib}. 

It was shown, among other results, in \cite{SznitmanAM} 
that the random interlacement graph $\intgraph^u$ consists of a unique infinite connected component and isolated vertices.
(Refinements of this result were obtained in \cite{LT,PT,RS}.)
In \cite{RS:Transience}, we showed that the random interlacement graph is almost surely transient for any $u>0$ in dimensions $d\geq 3$.
In Theorem~\ref{thm:slabs} of the present paper, we prove that for any $u>0$ and small enough $\varepsilon>0$, 
the set $\mathcal I^{u,\varepsilon}$ still contains an infinite connected component. 
In fact, Theorem~\ref{thm:slabs} implies that 
$\mathcal I^u$ and $\intgraph^u$ still have an infinite connected component in wide enough slabs, 
even after a small positive density of vertices of $\mathcal I^u$, respectively edges of $\intgraph^u$, 
is removed. 
One might interpret all these results as an evidence of the heuristic statement that 
the geometry of the interlacement graph is similar to that
of the underlying lattice $\Z^d$. 
Recently, this question has been settled in \cite{CP} by a clever refinement of the techniques in \cite{RS,RS:Transience}. 
It was proved in \cite{CP} (and later in \cite{DRS:ChemDist} with a different, model independent proof) that the graph distance in $\mathcal I^u$ 
is comparable to the graph distance in $\Z^d$, and a shape theorem holds for 
balls with respect to graph distance on $\mathcal I^u$. 
First results about heat-kernel bounds for the random walk on $\mathcal I^u$ have been recently obtained in \cite[Theorem~2.3]{PrShel}.

\medskip

An important role in understanding 
the local picture left by the trajectory of a random walk on
the discrete torus $(\Z/N\Z)^d$, $d \geq 3$ or the discrete cylinder $(\Z/N\Z)^d \times \Z$ , $d \geq 2$ 
is played by  
\[
u_*= \inf \{ u \geq 0 \; : \; \mathbb{P} [ 0 \leftrightarrow \infty \mbox{ in }\mathcal{V}^u]=0  \}  
\]
(see, e.g., \cite{Sznitman:Upperbound,teixeira_windisch}).
It follows from \cite[(1.53) and (1.55)]{SznitmanAM} that for $u<u'$, 
the set $\mathcal V^{u'}$ is stochastically dominated by $\mathcal V^u$. 
Therefore, for all $u>u_*$, 
$\mathbb{P} [ 0 \leftrightarrow \infty \mbox{ in }\mathcal{V}^u]=0$. 
Moreover, by \cite{Sznitman_Sidoravicius_cpam,SznitmanAM}, $u_*\in(0,\infty)$, i.e., 
there is a non-trivial phase transition for $\mathcal V^u$ in $u$ at $u_*$. 
In Theorem~\ref{thm:Vuvarepsilon} of this paper, we prove that for small enough $\varepsilon$, 
the $\varepsilon$-disordered vacant set $\mathcal V^{u,\varepsilon}$ 
still undergoes a non-trivial phase transition in $u$. 
In Theorem~\ref{thm:u*varepsilon} we give explicit upper and lower bounds on the phase transition threshold for 
$\mathcal V^{u,\varepsilon}$, as $\varepsilon\to 0$. 
These bounds suggest that the phase transition is actually robust with respect to noise. 
We state it as a conjecture in Remark~\ref{rem:conjecture:u*varepsilon}.

\section{Main results}\label{sec:mainresults}
\noindent

For $p\in(0,1)$, we define the random subset $\berbond^p$ of $\mathbb{E}^d$ by 
deleting each edge  with probability $(1-p)$ and retaining it with probability $p$, independently for all edges, 
and, similarly, the random subset $\mathcal B^p$ of $\Z^d$ by deleting every vertex of $\Z^d$ with probability $(1-p)$ and retaining it with probability $p$, 
independently for all vertices. 
We look at the random subgraphs of $(\Z^d,\mathbb{E}^d)$ with vertex set $\Z^d$  and edge set $\intgraph^u \cap \berbond^p$, 
and the one induced by the set of vertices $\mathcal I^u\cap\mathcal B^p\subset \Z^d$. 

\medskip

Our first theorem states that the graphs $\mathcal I^u$ and $\intgraph^u$ have infinite connected subgraphs in a wide enough slab, 
moreover, Bernoulli bond percolation on $\intgraph^u$ and Bernoulli site percolation on $\mathcal I^u$ 
restricted to this slab have a non-trivial phase transition. 

\begin{theorem}\label{thm:slabs}
Let $d\geq 3$ and $u>0$. There exist $p<1$ and $R \geq 1$ such that, almost surely, 
the random graphs $\mathcal I^u\cap\mathcal B^p$ and $\intgraph^u \cap \berbond^p$ contain 
infinite connected components in the slab $\Z^2\times[0,R)^{d-2}$. 
\end{theorem}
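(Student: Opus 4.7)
The plan is to prove Theorem~\ref{thm:slabs} by a two-dimensional static renormalization argument on the slab $S_R := \Z^2 \times [0,R)^{d-2}$, reducing the existence of an infinite component in $S_R$ to supercritical Bernoulli site percolation on a coarse-grained $\Z^2$. Partition the base $\Z^2$ of the slab into disjoint $L \times L$ tiles indexed by $i\in\Z^2$, set $\tilde B_i := (L i_1, L(i_1+1)] \times (L i_2, L(i_2+1)] \times [0,R)^{d-2}$, and declare $i$ \emph{good} if both $\intgraph^u \cap \berbond^p$ and the subgraph induced by $\mathcal I^u \cap \mathcal B^p$ contain, inside an $O(L)$-thickening of $\tilde B_i$, a large connected cluster that crosses $\tilde B_i$ in each of the two base directions and is joined to the analogous cluster in every nearest-neighbor tile $\tilde B_{i'}$. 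If the field $(\mathbf 1_{G_i})_{i\in\Z^2}$ stochastically dominates supercritical site percolation on $\Z^2$, then the good tiles percolate and their internal crossings chain up to an infinite cluster in $S_R$.

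The first step is to make $\P[G_i]$ large. The strong connectivity lemma from \cite{RS:Transience}, applied inside a slab of large enough thickness $R$, produces many disjoint paths in $\mathcal I^u$ between opposite faces of $\tilde B_i$; for fixed $u, R$ the number of such paths diverges as $L\to\infty$. The Bernoulli noise with $\varepsilon := 1-p$ is independent, so a union bound over the disjoint paths shows that all but a small fraction survive with high probability, and $\P[G_i] \to 1$ as $L\to\infty$ and $p\to 1$. The events $G_i$ are, however, long-range correlated because the interlacement is, so $\P[G_i]\to 1$ alone does not suffice. I would then invoke Sznitman's decoupling inequalities \cite{Sznitman:Decoupling}: at the cost of a small sprinkling of $u$, these bound the dependence between events supported on boxes separated by a distance polynomial in $L$ by an error decaying faster than any polynomial in $L$. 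Since $G_i$ factorizes as an interlacement-measurable event (to which decoupling applies) and a noise-measurable event (which is already independent over well-separated regions), a Liggett--Schonmann--Stacey-type construction gives stochastic domination of $(\mathbf 1_{G_i})_{i}$ by a Bernoulli site percolation with density above the critical threshold of $\Z^2$.

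The main obstacle is the design of $G_i$ so that three conditions can be balanced at a single choice of $(R, L, p)$: the intra-tile connectivity provided by \cite{RS:Transience} must survive the independent $\varepsilon$-noise with probability tending to $1$; crossings in adjacent good tiles must be forced into a common connected component of $\intgraph^u\cap\berbond^p$ (a local uniqueness condition on the giant cluster, enforced by demanding a sprinkled sheaf of disjoint connecting paths in the overlap region, where the thickness $R$ supplies the extra room needed); and the separation scale in the decoupling of \cite{Sznitman:Decoupling} must match the tile scale $L$ while the sprinkling of $u$ required there can be absorbed without exceeding the given level. Once these three estimates close, a standard Peierls/FKG step produces a positive-probability infinite cluster of good tiles on the coarse grid, and ergodicity of $\intgraph^u$ together with Kolmogorov's $0$-$1$ law for the independent noise upgrades this to the almost-sure statement for any $u>0$.
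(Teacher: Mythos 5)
Your overall architecture (coarse-grain the slab into blocks indexed by $\Z^2$, make blocks good with high probability using the strong connectivity lemma, control correlations via Sznitman's decoupling inequalities, and chain crossings of good blocks) matches the paper's strategy in outline, but two of your key steps would not go through as described. First, the passage from the decoupling inequalities to stochastic domination of the good-block field by supercritical Bernoulli site percolation via a Liggett--Schonmann--Stacey construction is not available. The interlacement has correlations decaying only like $|x-y|_\infty^{2-d}$, and Theorem~3.4 of \cite{Sznitman:Decoupling} does not furnish a single-scale mixing estimate between pairs of block events that could feed an LSS-type comparison; what it provides is a bound on \emph{recursively defined cascades} of monotone events across a geometric hierarchy of scales $L_n=l_0^nL_0$, at the price of a sprinkling in $u$ at each scale. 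This is exactly why the paper does not attempt domination: it defines bad events $\badseed_{x,n}$ by the recursion \eqref{recursive_def_A_x_n}, obtains the doubly exponential bounds $2^{-2^n}$ of Corollaries~\ref{cor:Ex} and \ref{cor:Fx}, and then runs the inductive argument of Lemma~\ref{l:badpaths} to convert these into a stretched-exponential bound on $*$-paths of bad blocks, after which planar duality (not domination) yields the infinite good path. Relatedly, the decoupling inequalities only apply to increasing or decreasing events, so your composite event $G_i$ must be split into monotone pieces; the paper does this by separating the decreasing event $\overline E^u_x$, the increasing event $\overline F^u_x$ (which also supplies the local uniqueness you try to get from a ``sheaf of disjoint paths''), and the independent Bernoulli event $\overline D_x$, sprinkling $u$ in opposite directions for the first two.

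Second, your use of the strong connectivity lemma ``inside a slab of large enough thickness $R$'' with $L\to\infty$ for fixed $R$ is not legitimate: Lemma~\ref{l:connectivity} connects points of $\mathcal I^u$ in a full-dimensional cube $[0,R)^d$ through a slightly enlarged cube, and gives no information about connectivity inside a thin box $L\times L\times R^{d-2}$ with $L\gg R$ --- that is essentially the statement being proved. The paper resolves this by tying the slab thickness to the renormalization scale ($R=2L_0$), so that each block is a genuine $d$-dimensional cube to which Lemma~\ref{l:connectivity} applies, and the two-dimensional duality argument is run on the layer $\GG_0\cap(\Z^2\times\{0\}^{d-2})$ of the renormalized lattice. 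Your mechanism for surviving the noise (a union bound over ``many disjoint paths'') also does not close for fixed $p<1$ and $L\to\infty$, since each path of length of order $L$ survives with probability only $p^{L}$; the paper instead chooses $p=p(L_0)$ close enough to $1$ that \emph{all} edges of a block are retained with high probability (the event $\overline D_x$ and Lemma~\ref{l:Hx}), which is permissible because the theorem only asserts the existence of some $p<1$.
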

As a byproduct of the proof of Theorem~\ref{thm:slabs}, we obtain 
the following generalization of the main result in \cite{RS:Transience}. 

\begin{theorem}\label{thm:transience}
Let $d\geq 3$ and $u>0$. 
Let $R_{\edge}$, $\edge \in \mathbb{E}^d$ be independent identically distributed positive random variables. 
The electric network $\{\edge~:~\edge \in \intgraph^u\}$ with resistances $R_{\edge}$ is almost surely transient, 
i.e., the effective resistance between any vertex in $\intgraph^u$ and infinity is finite. 
\end{theorem}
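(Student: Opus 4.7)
The plan is to reduce the claim to a unit-resistance transience statement about a Bernoulli-thinned interlacement graph (via truncation and Rayleigh's monotonicity principle), and then to establish that statement by re-running the renormalization scheme underlying the proof of Theorem~\ref{thm:slabs}. Concretely, let $p<1$ be the parameter appearing in Theorem~\ref{thm:slabs}. Since $R_\edge$ are i.i.d.\ and positive, one can choose $M<\infty$ large enough that $q:=\PP[R_\edge\leq M]\geq p$. Setting $\berbond_M:=\{\edge\in\EE^d:R_\edge\leq M\}$, the random set $\berbond_M$ has law $\berbond^q$, is independent of $\mu$ (and hence of $\intgraph^u$), and admits a coupling with some $\berbond^p\subseteq\berbond_M$.

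I claim that transience of the full network $(\intgraph^u, R_\edge)$ follows once one shows that $\intgraph^u\cap\berbond^p$, equipped with unit resistances, is a.s.\ transient. By Rayleigh's monotonicity principle, replacing the unit resistances on $\intgraph^u\cap\berbond^p$ by the actual values $R_\edge\in(0,M]$ increases the effective resistance by at most a factor of $M$, so the resulting network is still transient; then adding in the remaining edges of $\intgraph^u$ (those in $\berbond_M\setminus\berbond^p$ and those with $R_\edge>M$, all carrying finite positive resistances) can only decrease effective resistance further. Hence it remains to establish transience of $\intgraph^u\cap\berbond^p$ with unit resistances. The proof of Theorem~\ref{thm:slabs} will assemble, by combining the strong connectivity lemma of \cite{RS:Transience} with Sznitman's decoupling inequalities \cite{Sznitman:Decoupling}, a renormalized hierarchy of ``good'' boxes at all scales $L_n$ within which $\intgraph^u\cap\berbond^p$ exhibits robust crossings and local branching. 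This is precisely the multi-scale structure on top of which the finite-energy flow construction of \cite{RS:Transience} was built for $\intgraph^u$ itself, and running essentially the same construction on the thinned graph should yield a unit flow of finite energy from any vertex to infinity, which is equivalent to transience.

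The main obstacle lies in this last step: after the $(1-p)$-fraction of edges has been removed by the Bernoulli thinning, one has to verify that the probability a scale-$L_n$ box is good still tends to $1$ quickly enough in $n$ for the energy of the multi-scale flow to be summable, not merely fast enough for the renormalized cluster to be infinite. The decoupling inequalities reduce this to essentially independent estimates at each scale, which is what allows the required energy sum to be controlled; this is the delicate point that must be extracted from the renormalization in the proof of Theorem~\ref{thm:slabs} on top of what is needed for mere percolation.
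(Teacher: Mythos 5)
Your proposal is correct and follows essentially the same route as the paper: reduce to transience of $\intgraph^u\cap\berbond^p$ with unit resistances, and establish that by combining the renormalization behind Theorem~\ref{thm:slabs} (in particular the stretched-exponential bound of Lemma~\ref{l:badpaths}) with the finite-energy flow construction of \cite{RS:Transience}. The only cosmetic difference is that your truncation-plus-Rayleigh reduction is a self-contained proof of the ``easy'' implication of the Pemantle--Peres equivalence \cite{PP:Transience}, which the paper simply cites.
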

Theorem \ref{thm:transience} is a generalization of the main result of \cite{RS:Transience},
since the transience of the unique infinite connected component of the 
random interlacement graph $\intgraph^u$ follows from the case when $R_{\edge}$ are almost surely equal to $1$ 
(see, e.g., \cite{DS}). 
The result of Theorem~\ref{thm:transience} is equivalent (see the main result of \cite{PP:Transience}) to the following statement: 
for any $u>0$, there exists $p<1$ such that the graph $\intgraph^u \cap \berbond^p$ contains 
a transient component, i.e., the simple random walk on it is transient. 
The proof of this fact will come as a byproduct of the proof of Theorem~\ref{thm:slabs}.

The main idea of the proofs of Theorems~\ref{thm:slabs} and \ref{thm:transience} is renormalization. 
We partition the graph $\Z^d$ into disjoint blocks of equal size. 
A block is called good if the graph $\intgraph^u$ contains a unique large connected component in this block 
and all the edges of the block are in $\berbond^p$, otherwise it is called bad. 
A more precise definition will be given in Section~\ref{sec:connectivity}. 
It will be shown that paths of good blocks contain paths of $\intgraph^u\cap\berbond^p$.
In particular, percolation of good blocks implies percolation of $\intgraph^u\cap\berbond^p$. 
Using the strong connectivity result of \cite{RS:Transience}, stated as Lemma~\ref{l:connectivity} below, 
we show that a block is good with probability tending to $1$, as the size of the block increases. 
We then use the decoupling inequalities of \cite{Sznitman:Decoupling}, stated as Theorem~\ref{thm:decoupling} below, 
to show in Lemma~\ref{l:badpaths} that $*$-connected components of bad blocks are small. 
With the result of Lemma~\ref{l:badpaths}, the existence statement of Theorem~\ref{thm:slabs} follows 
using a standard duality argument, and 
the proof of Theorem~\ref{thm:transience} is reminiscent of the proof of Theorem~1 in Section~3 of \cite{RS:Transience}. 

\medskip

In our next theorem, we show that for small enough $\varepsilon>0$, 
the $\varepsilon$-disordered vacant set $\mathcal V^{u,\varepsilon}$ undergoes a non-trivial phase transition in $u$. 
Let 
\[
u_*(\varepsilon)= \inf \{ u \geq 0 \; : \; \mathbb{P} [ 0 \leftrightarrow \infty \mbox{ in }\mathcal{V}^{u,\varepsilon}]=0  \} .\
\]
\begin{theorem}\label{thm:Vuvarepsilon}
Let $d\geq 3$. For any $\varepsilon\in(0,1/2)$ and $u>u_*(\varepsilon)$, 
\[
\mathbb{P} [ 0 \leftrightarrow \infty \mbox{ in }\mathcal{V}^{u,\varepsilon}]=0 .\
\]
In other words, for $\varepsilon\in(0,1/2)$, 
the $\varepsilon$-disordered vacant set $\mathcal V^{u,\varepsilon}$ undergoes 
a phase transition in $u$ at $u_*(\varepsilon)$. 
Moreover, there exists $\varepsilon_0>0$ such that for all $\varepsilon\leq \varepsilon_0$, 
\[
0<  u_*(\varepsilon) < \infty .\
\]
\end{theorem}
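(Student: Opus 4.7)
The plan is to establish the three parts of the theorem separately: (a) monotonicity in $u$ for $\varepsilon<1/2$; (b) $u_*(\varepsilon)>0$ for small $\varepsilon$; (c) $u_*(\varepsilon)<\infty$ for small $\varepsilon$. For (a) I would build a monotone coupling. Starting from the standard coupling of $(\mathcal V^u)_{u\ge 0}$ from \cite{SznitmanAM} (in which $u\mapsto\mathcal V^u$ is pointwise decreasing), sample an independent i.i.d.\ family $(U_x)_{x\in\Z^d}$ of uniform $[0,1]$ variables and declare $x\in\mathcal V^{u,\varepsilon}$ iff $U_x\le p_u(x)$, where $p_u(x)=1-\varepsilon$ for $x\in\mathcal V^u$ and $p_u(x)=\varepsilon$ for $x\in\mathcal I^u$. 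Because $\varepsilon<1-\varepsilon$ (as $\varepsilon<1/2$) and $\mathcal V^u$ is decreasing in $u$, the map $u\mapsto p_u(x)$ is non-increasing, so the coupled family $(\mathcal V^{u,\varepsilon})_{u\ge 0}$ is pointwise decreasing in $u$; the first claim then follows from the definition of $u_*(\varepsilon)$ as an infimum, by picking $u'\in(u_*(\varepsilon),u)$ at which non-percolation already holds and using $\mathcal V^{u,\varepsilon}\subseteq\mathcal V^{u',\varepsilon}$ in the coupling.

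For (b) I would use $\mathcal V^{u,\varepsilon}\supseteq\mathcal V^u\cap\{\theta_x=0\}=\Z^d\setminus A$ with $A:=\mathcal I^u\cup\{\theta=1\}$, and show $\Z^d\setminus A$ percolates for $u,\varepsilon$ small enough. Renormalize at scale $L$: a block $B$ of side $L$ is \emph{bad} if $B\cap A\neq\emptyset$, so by a union bound
\[
\P[B\text{ bad}]\le L^d\bigl(\P[0\in\mathcal I^u]+\varepsilon\bigr)\le L^d\bigl(u\,\mathrm{cap}(\{0\})+\varepsilon\bigr),
\]
which can be made arbitrarily small at fixed $L$ by taking $u,\varepsilon$ small. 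Sznitman's decoupling inequality (Theorem~\ref{thm:decoupling}) controls the dependence between bad-block events in distant blocks, and a standard stochastic domination argument then shows that the bad-block process is dominated by a very subcritical Bernoulli site percolation on the coarse lattice. The complementary good blocks therefore supercritically percolate on the coarse lattice; since any union of adjacent good blocks is entirely contained in $\Z^d\setminus A$, an infinite cluster of good blocks delivers an infinite cluster in $\mathcal V^u\cap\{\theta=0\}\subseteq\mathcal V^{u,\varepsilon}$.

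For (c) I would use the reverse inclusion $\mathcal V^{u,\varepsilon}\subseteq\mathcal V^u\cup\{\theta=1\}=:A'$ and rule out percolation of $A'$ for $u$ large and $\varepsilon$ small. Symmetrically, call $B$ \emph{occupied} if $B\cap A'\neq\emptyset$; a union bound yields $\P[B\text{ occupied}]\le L^d\bigl(e^{-u\,\mathrm{cap}(\{0\})}+\varepsilon\bigr)$, which is smaller than any prescribed threshold once $u$ is large and then $\varepsilon$ is small. Theorem~\ref{thm:decoupling} again yields stochastic domination of the occupied-block process by a subcritical Bernoulli site percolation, so there is no infinite cluster of occupied blocks; any infinite cluster in $A'$ would force an infinite chain of neighboring occupied blocks, so $A'$ (and hence $\mathcal V^{u,\varepsilon}$) does not percolate. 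The main obstacle I anticipate is (b): claim (c) only kills percolation of a sparse set by a standard Peierls-with-decoupling argument, whereas (b) requires producing an actual infinite cluster in $\Z^d\setminus A$ despite the long-range correlations of $\mathcal I^u$, and the delicate point is to tune $L$, $u$, and $\varepsilon$ in the correct order so that the decoupling error terms do not spoil the stochastic domination by supercritical Bernoulli percolation at the coarse scale.
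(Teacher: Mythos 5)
Your part (a) is correct and is essentially the paper's Lemma~\ref{l:coupling} in different clothing: both arguments produce, for fixed noise level $\varepsilon<1/2$, a monotone coupling exploiting $\varepsilon<1-\varepsilon$ together with the monotonicity of $u\mapsto\mathcal I^u$. Your part (c) is also workable in outline, and is a cruder but legitimate alternative to the paper's route via $u_{**}$ (the paper's seed event is an annulus-crossing of $\mathcal V^u$, yours is ``the block is not entirely covered by $\mathcal I^u$''); two caveats: the mechanism delivered by Theorem~\ref{thm:decoupling} is \emph{not} stochastic domination by a subcritical product measure (the bad-block field has polynomially decaying correlations, so Liggett--Schonmann--Stacey is unavailable) but rather the doubly-exponential bound on the hierarchical events \eqref{recursive_def_A_x_n} followed by the Peierls-type inclusion \eqref{eq:inclusion}; and the noise events must be split off and handled by the separate i.i.d.\ recursion of Lemma~\ref{l:Hx}, since Theorem~\ref{thm:decoupling} only applies to events measurable with respect to $\intgraph^u$.

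Part (b), however, is genuinely wrong, and this is the heart of the theorem. Your bad-block event is ``$B\cap\mathcal I^u\neq\emptyset$''. But each trajectory in $\mathrm{Supp}(\mu)$ is an unbounded connected path, so the set of blocks it visits is an \emph{infinite connected} set of bad blocks; consequently, for every $u>0$ and every block size $L$, the bad blocks contain infinite clusters almost surely, the probability that the origin block is joined to distance $N$ by a bad $*$-path is bounded below by $1-e^{-u\,\mathrm{cap}(B(0,L))}>0$ uniformly in $N$, and no form of domination by subcritical Bernoulli percolation can hold. The decoupling machinery cannot rescue this: Theorem~\ref{thm:decoupling} requires $l_0\geq C(d,u,\delta)$, where $C(d,u,\delta)$ must be taken of order $u^{-1/(d-2)}$ (up to logarithms) as $u\to0$ in order to absorb the sprinkling error $e^{-cuL_0^{d-2}(\cdots)}$ into the $\tfrac14$, and the resulting requirement $\sup_x\mathbb P[\badseed_x(\intgraph^u)]\lesssim l_0^{-2d}\lesssim u^{2d/(d-2)}$ is incompatible with the lower bound $\mathbb P[\badseed_x(\intgraph^u)]\geq m(u)\asymp u$, since $2d/(d-2)>1$. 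There is no choice of the order of $L$, $u$, $\varepsilon$ that resolves this; it is the same obstruction that makes $u_*>0$ a hard theorem rather than a union bound. This is precisely why the paper does \emph{not} define good blocks by complete vacancy: it defines them (Definition~\ref{def:good}) by the vacant set containing a large, locally unique cluster of density close to $\eta(u)$, relies on the threshold $\overline u>0$ from \cite{DRS} (itself a substantial result) and the connectivity Lemma~\ref{l:connectivity:bigdiam}, so that the seed probabilities tend to $1$ as $L_0\to\infty$ \emph{at fixed} $u<\overline u$ --- exactly the form of input that Corollary~\ref{cor:decoupling} can use without circularity. Your proposal is missing this entire ingredient.
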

The first statement of Theorem~\ref{thm:Vuvarepsilon} is proved in Lemma~\ref{l:coupling}. 
It follows from a standard coupling argument and 
the fact that the set $\mathcal V^{u'}$ is stochastically dominated by $\mathcal V^{u}$ for $u<u'$
(see \cite[(1.53) and (1.55)]{SznitmanAM}). 
The second statement of Theorem~\ref{thm:Vuvarepsilon} follows 
from the more general statement of Theorem~\ref{thm:u*varepsilon}, in which we 
give explicit upper and lower bounds on $u_*(\varepsilon)$, as $\varepsilon\to 0$. 
The proof of Theorem~\ref{thm:u*varepsilon} uses renormalization, 
and is very similar in spirit to the proof of Theorem~\ref{thm:slabs}. 

\medskip

The bounds on $u_*(\varepsilon)$ that we obtain in Theorem~\ref{thm:u*varepsilon} 
are in terms of certain thresholds describing local behavior of $\mathcal V^u$ in sub- and supercritical regimes
(see \eqref{def:u**} and Definition~\ref{def:tildeu}, respectively). 
In particular, they are purely in terms of $\mathcal V^u$ and not $\mathcal V^{u,\varepsilon}$. 
As we discuss in Remark~\ref{rem:conjecture:u*varepsilon}, 
these thresholds are conjectured to coincide with $u_*$, 
therefore it is reasonable to believe that 
the phase transition of $\mathcal V^u$ is stable with respect to small random noise. 
In other words, the following conjecture holds: 
\[
 \lim_{\varepsilon\to 0} u_*(\varepsilon) = u_* .\
\]
Finally, note that it is essential for $u_*(\varepsilon)<\infty$ that the parameter $\varepsilon$ is small. 
For example, since $\mathcal V^{u,1/2}$ has the same law as the Bernoulli site percolation with parameter $1/2$, 
which is supercritical in dimensions $d\geq 3$ (see \cite{CampaninoRusso}), 
we obtain that $u_*(1/2) = \infty$. 

\medskip

We now describe the structure of the remaining sections of the paper. 
We recall the strong connectivity lemma of \cite{RS:Transience} and 
the decoupling inequalities of \cite{Sznitman:Decoupling} in Section~\ref{section:notationandproperties}. 
In Section~\ref{subsection:seed} we construct and study seed events which are used in Section~\ref{sec:connectivity} to define good blocks. 
Lemma~\ref{l:badpaths}, the main ingredient of the proofs of Theorems~\ref{thm:slabs} and \ref{thm:transience}, is proved in Section~\ref{sec:connectivity}. 
The proofs of Theorems \ref{thm:slabs} and \ref{thm:transience} are given in Section~\ref{sec:proofs}, 
and the proof of Theorem~\ref{thm:Vuvarepsilon} is given in Section~\ref{sec:vacantset}, 
where we also give explicit bounds on $u_*(\varepsilon)$, as $\varepsilon \to 0$. 

\section{Notation and known results}\label{section:notationandproperties}
\noindent

In this section we introduce basic notation and collect some properties of the random interlacements, 
which are recurrently used in our proofs.

\subsection{Notation}
\noindent

For $a\in\R$, we write $|a|$ for the absolute value of $a$, and $\lfloor a\rfloor$ for the integer part of $a$.
For $(x_1,\dots,x_d)= x\in \Z^d$, we write $|x|_\infty$ for the $l^\infty$-norm of $x$, i.e., $|x|_\infty = \max\left(|x_1|,\ldots,|x_d|\right)$, 
and $|x|_1$ for the $l^1$-norm of $x$, i.e., $|x|_1 = \sum_{i=1}^d|x_i|$. 
For $R>0$ and $x\in\Z^d$, let $B(x,R) = \{y\in\Z^d~:~|x-y|_\infty\leq R\}$  be the $l^\infty$-ball of radius $R$ centered at $x$, and $B(R) = B(0,R)$.

For $x\in\Z^d$ and integers $m<n$, we write $x+[m,n)^d$ for the set of vertices  
$y = (y_1,\ldots,y_d)\in\Z^d$ with $m\leq y_i-x_i < n$ for all $i\in\{1,\ldots,d\}$. 
For $\edge \in\EE^d$, we write $\edge \in x+[m,n)^d$ if both of its endvertices are in $x+[m,n)^d$. 
If $\gengraph \subseteq \EE^d$, we denote by $\gengraph \cap (x+[m,n)^d)$ the set of edges of $\gengraph$ with 
both endvertices in $x+[m,n)^d$. 
For $x,y\in\Z^d$, we write $x\leftrightarrow y$ in $\gengraph$, 
if $x$ and $y$ are in the same connected component of the graph $\gengraph$. 

Let $(\Omega_1,\mathcal{F}_1,\mathbb{P}^u)$, with $\Omega_1 = \{0,1\}^{\mathbb E^d}$ and the canonical $\sigma$-algebra $\mathcal F_1$, be 
the probability space on which $\intgraph^u$ is defined. 
For $\omega\in \Omega_1$, we say that $\edge\in\mathbb E^d$ is in $\intgraph^u$ when $\omega_\edge = 1$. 
Let $(\Omega_2,\mathcal{F}_2,\mathbf{P}_p)$, with $\Omega_2 = \{0,1\}^{\mathbb E^d}$ and the canonical $\sigma$-algebra $\mathcal F_2$, be 
the probability space on which $\berbond^p$ is defined. 
For $\omega\in \Omega_2$, we say that $\edge\in\mathbb E^d$ is in $\berbond^p$ when $\omega_\edge = 1$. 
Finally, let $(\Omega,\mathcal{F},\mathbb{P}) = (\Omega_1\times\Omega_2,\mathcal{F}_1\times\mathcal{F}_2,\mathbb{P}^u\otimes\mathbf{P}_p)$ 
denote the probability space on which the random interlacement graph $\intgraph^u$ and Bernoulli bond 
percolation configuration $\berbond^p$ are jointly defined. 

Throughout the paper, we use the following notational agreement. 
For events $A_1\in\mathcal F_1$ and $A_2\in\mathcal F_2$, we denote the corresponding events $A_1\times\Omega_2$ and $\Omega_1\times A_2$ in $\mathcal F$ 
also by $A_1$ and $A_2$, respectively. 
We denote by $\mathds{1}(A)$ the indicator of event $A$ and by $A^c$ the complement of $A$.
For $i\in\{1,2\}$, given a random subset $\gengraphrand(\omega)$ of $\EE^d$, with $\omega\in\Omega_i$, 
and an event $A\in\mathcal F_i$, we define 
\begin{equation}\label{eq:eventsubset}
 A(\gengraphrand) = \{\omega\in\Omega_i~:~\chi_{\gengraphrand (\omega)}\in A\} ,\
\end{equation}
where for $\edge \in\EE^d$, $\chi_{\gengraphrand (\omega)}(\edge)$ equals $1$ if $\edge\in\gengraphrand (\omega)$, and $0$ otherwise. 
Conversely, for an element $\omega\in\{0,1\}^{\EE^d}$, let 
\begin{equation}\label{eq:Gomega}
 G_\omega = \{\edge~:~\omega_\edge =1 \} .\
\end{equation}
(By our convention, we also denote by $G_\omega$ the graph with the vertex set $\Z^d$ and the edge set $\{\edge~:~\omega_\edge = 1\}$.)
An event $A\in \mathcal F_1$ is called increasing, if for any $\omega\in A$, all the elements $\omega'$ with $G_{\omega'}\supseteq G_\omega$ are in $A$. 
The event $A$ is called decreasing, if $A^c$ is increasing. 
Throughout the text, we write $c$ and $C$ for small positive and large finite constants, respectively, that may depend on $d$ and $u$. 
Their values may change from place to place.

\subsection{Strong connectivity property}
\noindent

The following strong connectivity lemma follows from Proposition~1 in \cite{RS:Transience}. 
\begin{lemma}\label{l:connectivity}
Let $d\geq 3$, $u>0$, and $\varepsilon>0$. There exist constants $c = c(d,u,\varepsilon)>0$ and $C=C(d,u,\varepsilon)<\infty$ such 
that for all $R\geq 1$, 
\[
\mathbb P\left[\bigcap_{x,y\in \mathcal I^u\cap [0,R)^d} \left\{x\leftrightarrow y\quad\mbox{in}\quad
\intgraph^u\cap [-\varepsilon R,(1+\varepsilon)R)^d\right\}\right] 
\geq 1 - C\exp(-cR^{1/6}) .\
\]
\end{lemma}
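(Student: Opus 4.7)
The plan is to reduce the statement to a \emph{local uniqueness} claim: with probability at least $1 - C\exp(-cR^{1/6})$, there is a single cluster of $\intgraph^u \cap [-\varepsilon R,(1+\varepsilon)R)^d$ that contains every vertex of $\mathcal I^u \cap [0,R)^d$. Once this is established, the connectivity statement in the lemma is immediate. I would pursue this via a renormalization argument on a carefully chosen intermediate scale.

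First I would fix a scale $\ell = \lfloor R^{1/3}\rfloor$ and tile the enlarged box $[-\varepsilon R,(1+\varepsilon)R)^d$ by disjoint cubes $B_z = z + [0,\ell)^d$, $z\in \ell\Z^d$. Around each cube $B_z$ I would work with the concentric enlargement $\widetilde B_z = z + [-\ell/2, 3\ell/2)^d$. I would then call $B_z$ \textbf{good} if (i) the set $\mathcal I^u \cap \widetilde B_z$ contains a unique connected component of $\intgraph^u \cap \widetilde B_z$ of diameter at least $\ell/4$, call it the giant cluster, and (ii) every vertex of $\mathcal I^u \cap B_z$ belongs to this giant cluster. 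The two key probability estimates to establish are: (a) $\mathbb P[B_z \text{ is bad}] \leq C\exp(-c\ell^{1/2})$, uniformly in $z$; and (b) if $B_z$ and $B_{z'}$ are neighboring good cubes (so $\widetilde B_z \cap \widetilde B_{z'} \neq \emptyset$), then their giant clusters necessarily coincide, since both are $\geq \ell/4$-diameter clusters in the overlap region and the uniqueness clause forces them to merge.

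For step (a), I would exploit the Poissonian construction: conditionally on the number of trajectories that hit $\widetilde B_z$, each such trajectory is a bi-infinite random walk forced through $\widetilde B_z$, and by standard random-walk estimates in $d\geq 3$ it visits, with overwhelming probability, enough vertices of $\widetilde B_z$ to span it. Summing over trajectories and using the FKG-type decoupling of disjoint sub-boxes, one controls the probability of multiple disjoint macroscopic clusters. Alternatively, the decoupling inequalities from \cite{Sznitman:Decoupling} combined with a coarser sub-renormalization of $\widetilde B_z$ give the stretched-exponential bound. Step (b) is a deterministic consequence of the definition, so the game reduces to a union bound: the probability that any of the $O((R/\ell)^d)$ cubes intersecting $[0,R)^d$ is bad is at most $C(R/\ell)^d \exp(-c\ell^{1/2}) \leq C\exp(-cR^{1/6})$ with the choice $\ell = R^{1/3}$, which matches the stated rate.

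The main obstacle is step (a): proving \emph{uniqueness} of the giant cluster in $\widetilde B_z$ with stretched-exponential probability. Mere existence of a macroscopic cluster is much easier; ruling out two comparable clusters requires showing that any two candidate clusters are joined by a single trajectory of $\mathrm{Supp}(\mu)$. This is where I would invoke the trajectory-based argument of \cite{RS:Transience}: a sprinkling-type decomposition where the Poisson process at level $u$ is split into independent processes at levels $u/2$ and $u/2$; the first produces two candidate macroscopic clusters (up to a bad event of small probability), and one then uses the second to show that, with the required probability, a single additional trajectory hits both candidates and welds them together. The $R^{1/6}$ rate is essentially dictated by this sprinkling and the $\exp(-c\ell^{1/2})$ bound tolerated by Sznitman's decoupling at the block scale; any attempt to push the exponent higher would require sharper decoupling inputs than we need here.
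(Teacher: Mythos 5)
Two things you should know at the outset. First, the paper does not prove this lemma at all: it is quoted verbatim from Proposition~1 of \cite{RS:Transience}, where it is derived from a strong connectivity statement (Lemma~13 there) by a direct argument at scale $R$ — a chaining of the Poissonian trajectories using capacity lower bounds for random walk ranges, which is where the exponent $1/6$ comes from. Second, your attempt to reprove it by renormalization has a genuine gap at its core.

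The gap is that your step (a) is circular, and in fact strictly harder than the lemma itself. You need, for a box of side $\ell=R^{1/3}$, the event ``there is a unique giant cluster of $\intgraph^u\cap\widetilde B_z$ and it contains every vertex of $\mathcal I^u\cap B_z$'' to fail with probability at most $C\exp(-c\ell^{1/2})$. That is exactly the statement of the lemma at scale $\ell$, except with stretch exponent $1/2$ in place of $1/6$; the known input (Proposition~1 of \cite{RS:Transience}) only gives $\exp(-c\ell^{1/6})$, which after your union bound over $(R/\ell)^d$ blocks yields $\exp(-cR^{1/18})$, not the claimed rate. So the renormalization does not reduce the problem to anything easier — it relabels it at a smaller scale with a stronger required rate — and neither of the two mechanisms you offer for (a) closes this. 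The ``sum over trajectories'' sketch ignores the actual difficulty, which is not that a single trajectory spans the box but that \emph{distinct} trajectories (each contributing its own cluster) must be welded to one another; and Sznitman's decoupling inequalities, as the paper itself emphasizes, are only available for monotone events, whereas ``unique giant cluster containing all of $\mathcal I^u\cap B_z$'' is not monotone in either direction. Your sprinkling idea ($u=u/2+u/2$, weld with the second half) is indeed the right spirit — it is essentially what \cite{RS:Transience} does — but carrying it out requires the quantitative capacity estimates for random walk traces that constitute the real content of that proof, and those are precisely what you have left unproved.

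A secondary issue: step (b) is not the deterministic consequence you claim. The giant cluster of $\widetilde B_{z'}$ need not contain a connected piece of diameter $\geq\ell/4$ inside the overlap $\widetilde B_z\cap\widetilde B_{z'}$ (its connections may route outside the overlap), and since $B_z\cap B_{z'}=\emptyset$ there is no vertex guaranteed to lie in both giants. This is fixable by strengthening the good event, but as written the gluing does not go through.
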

Lemma~\ref{l:connectivity} may seem more general than Proposition~1 in \cite{RS:Transience}, 
but, in fact, the two results are equivalent. 
In order to see this, the reader may check how Proposition~1 is derived from Lemma~13 in \cite{RS:Transience}.

\subsection{Decoupling inequalities}
\noindent

Let  
\begin{equation}\label{eq:ld}
\ld = 30\cdot 4^d.\
\end{equation}
(The choice of $\ld$ will be justified in the proof of Lemma~\ref{l:badpaths}.)
Let $L_0$ and $l_0 \geq \ld$ be positive integers. We introduce the geometrically increasing sequence of length scales
\[
 L_n = l_0^n L_0, \qquad n \geq 1.
\]
 For $n\geq 0$, we introduce the renormalized lattice graph $\GG_n$ by
\[
 \GG_n = L_n \Z^d = \{L_nx ~:~ x\in\Z^d\} .\
\]
For $x\in\GG_n$ and $n\geq 0$, let 
\[
 \Lambda_{x,n} = \GG_{n-1}\cap(x+[0,L_n)^d) .\
\]

Let $\Psi_{\edge}, \edge \in\EE^d$ denote the canonical coordinates on $\{0,1\}^{\EE^d}$. 
For $x\in\GG_0$, let $\badseed_x = \badseed_{x,0} = \badseed_{x,0,L_0}$ be a $\sigma(\Psi_{\edge}, \edge \in x + [-L_0, 3L_0)^d)$-measurable event. 
We call events of the form $\badseed_{x,0,L_0}$ \emph{seed events}.
We denote the family of events $(\badseed_{x,0,L_0}~:~L_0\geq 1,x\in\GG_0)$ by $\badseed$.
Examples of seed events important for this paper will be considered in Section~\ref{subsection:seed}.
The reader should think about the events $\badseed_{x,0,L_0}$ as ``bad'' events.
Now we recursively define bad events on higher length scales using seed events. 
For $n\geq 1$ and $x\in \Z^d$, denote by $\badseed_{x,n}=\badseed_{x,n,L_0}$ the event that there exist 
$x_1,x_2\in \Lambda_{x,n}$ with $|x_1-x_2|_\infty > L_n/\ld$ such that the
events $\badseed_{x_1,n-1}$ and $\badseed_{x_2,n-1}$ occur:
\begin{equation}\label{recursive_def_A_x_n}
 \badseed_{x,n}= \bigcup_{  x_1,x_2\in \Lambda_{x,n} ; \, |x_1-x_2|_\infty > \frac{L_n}{\ld}  }
\badseed_{x_1,n-1} \cap \badseed_{x_2,n-1}   \quad.\
\end{equation}
(For simplicity, we omit the dependence of $\badseed_{x,n}$ on $L_0$ from the notation.)
Note that $\badseed_{x,n}$ is $\sigma(\Psi_{\edge}, \edge \in x + [-L_n,3L_n)^d)$-measurable. 
(This can be shown by induction on $n$.) 

Recall the definition \eqref{eq:eventsubset}. 
The following theorem is a special case of Theorem~3.4 in \cite{Sznitman:Decoupling} (modulo some minor changes that we explain in the proof). 
\begin{theorem}\label{thm:decoupling}
For all $d \geq 3$, $u > 0$ and $\delta\in(0,1)$, there exists $C = C(d,u,\delta)<\infty$ such that 
for all $n\geq 0$, $L_0\geq 1$, and $l_0\geq C$ a multiple of $\ld$,
we have 
\begin{enumerate}
 \item 
if $\badseed_x$ are decreasing events, then for all $u'\geq (1+\delta)u$, 
\begin{equation}\label{sznit_bound_decreasing}
 \mathbb P\left[\badseed_{0,n}(\intgraph^{u'})\right] 
\leq \left(l_0^{2d}
\sup_{x\in \GG_0\cap[0,L_n)^d}
\mathbb P\left[\badseed_x(\intgraph^u)\right] + \frac14\right)^{2^n} ,\
\end{equation}
 \item 
if $\badseed_x$ are increasing events, then for all $u'\leq (1-\delta)u$, 
\begin{equation}\label{sznit_bound_increasing}
 \mathbb P\left[\badseed_{0,n}(\intgraph^{u'})\right] 
\leq \left(l_0^{2d}
\sup_{x\in \GG_0\cap[0,L_n)^d}
\mathbb P\left[\badseed_x(\intgraph^u)\right] + \frac14\right)^{2^n} .\
\end{equation}
\end{enumerate}
\end{theorem}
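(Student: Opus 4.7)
The result is declared in the excerpt to be a special case of Theorem~3.4 of \cite{Sznitman:Decoupling}, so my plan is to verify the reduction rather than redo the renormalization cascade from scratch. The underlying mechanism is easy to describe. Applying the recursive definition \eqref{recursive_def_A_x_n} and a union bound yields
\begin{equation*}
\mathbb P[\badseed_{0,n}(\intgraph^{v})]\leq l_0^{2d}\sup_{\substack{x_1,x_2\in\Lambda_{0,n}\\ |x_1-x_2|_\infty>L_n/\ld}}\mathbb P\bigl[\badseed_{x_1,n-1}(\intgraph^{v})\cap\badseed_{x_2,n-1}(\intgraph^{v})\bigr].
\end{equation*}
The supports $x_i+[-L_{n-1},3L_{n-1})^d$ of the two sub-events have $l^{\infty}$-separation at least $L_n/\ld-4L_{n-1}=L_{n-1}(l_0/\ld-4)$, which, when $l_0$ is a sufficiently large multiple of $\ld$, is much larger than their diameter; this is the geometric input required for Sznitman's decoupling.

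Next I would invoke the single-scale decoupling inequality. For events $A_1,A_2$ of a common monotone type supported in boxes well-separated on scale $L_{n-1}$, there exist sprinkled levels $v'<v$ (with $v-v'$ controllable) and an error $\varepsilon_{n}$ decaying superpolynomially in $L_n$, such that in the decreasing case
\begin{equation*}
\mathbb P\bigl[A_1(\intgraph^{v})\cap A_2(\intgraph^{v})\bigr]\leq \mathbb P\bigl[A_1(\intgraph^{v'})\bigr]\,\mathbb P\bigl[A_2(\intgraph^{v'})\bigr]+\varepsilon_n,
\end{equation*}
and analogously with $v'>v$ for increasing events. Choose a sprinkled sequence $u=u_0'<u_1'<\cdots<u_\infty'\leq(1+\delta)u$ (or its reverse for the increasing case) whose total drift $\sum_n(u_n'-u_{n-1}')$ is at most $\delta u$, and substitute to obtain a recursion of the form $a_n\leq l_0^{2d}a_{n-1}^2+l_0^{2d}\varepsilon_n$ for $a_n:=\sup_{x\in\GG_n}\mathbb P[\badseed_{x,n}(\intgraph^{u_n'})]$. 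Provided $l_0$ is large enough that $\sum_n l_0^{4d}\varepsilon_n$ is small, a routine iterative argument (the "closure" step in \cite[Section~3]{Sznitman:Decoupling}, done with the safety margin $1/4$) then delivers the claimed bound $(l_0^{2d}a_0+1/4)^{2^n}$.

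The main obstacle is not the cascade itself, which is standard, but the bookkeeping needed to check that the present statement genuinely is a special case of \cite[Theorem~3.4]{Sznitman:Decoupling}. Two small adjustments deserve attention. First, the seed events $\badseed_{x,0,L_0}$ here are measurable with respect to the edge variables $\Psi_{\edge}$ of $\intgraph^u$, whereas \cite{Sznitman:Decoupling} is written for events measurable with respect to the vertex occupation $\mathcal I^u$; since $\intgraph^u$ is a deterministic function of the underlying trajectory Poisson process, the soft local-time coupling used in \cite{Sznitman:Decoupling} transfers without change. Second, one should verify that the separation constant $\ld=30\cdot 4^d$, whose value is tailored to Lemma~\ref{l:badpaths} further on, is compatible with the geometric hypotheses of \cite[Theorem~3.4]{Sznitman:Decoupling}; this, together with the requirement that $l_0$ be a multiple of $\ld$, is the content of the parenthetical "minor changes" referenced in the theorem statement.
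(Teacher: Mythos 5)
Your proposal takes the same route as the paper: the theorem is obtained by reduction to Theorem~3.4 of \cite{Sznitman:Decoupling}, and your sketch of the mechanism behind that theorem (union bound over well-separated pairs from \eqref{recursive_def_A_x_n}, single-scale decoupling with sprinkling along a sequence of levels drifting by at most $\delta u$, and closure of the quadratic recursion with the $1/4$ safety margin) is the correct picture of what is being invoked. Two small corrections to the bookkeeping you rightly identify as the real content: first, the decoupling in \cite{Sznitman:Decoupling} is proved by a direct coupling/sprinkling of the excursion point processes, not by soft local times (that method is due to Popov and Teixeira and postdates the cited paper), although your substantive point---that the coupling acts at the level of the trajectories and therefore applies equally to edge-measurable events such as $\badseed_x(\intgraph^u)$, cf.\ Theorem~2.1, Remark~2.5(3) and Corollary~2.1' of \cite{Sznitman:Decoupling}---is correct; second, besides the edge-versus-vertex issue and the value of $\ld$, there is a third adjustment the paper flags that you omit, namely that the events $\badseed_{x,n}$ defined by \eqref{recursive_def_A_x_n} are not cascading in the strict sense of Definition~3.1 of \cite{Sznitman:Decoupling} (the recursive property is required there for every $l$ a multiple of $100$, whereas here it holds only for $l=l_0$); this is harmless because the statement and proof of Theorem~3.4 only ever use the fixed scales $L_n=l_0^nL_0$, but it should be checked when claiming the result is literally a special case.
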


\begin{proof}[Proof of Theorem~\ref{thm:decoupling}]
We refer the reader to Section 3 of \cite{Sznitman:Decoupling} for the notation. 
Our events $\badseed_{x,n}$ correspond to the events $G_{x,L_n}$ of \cite{Sznitman:Decoupling}, 
$\Lambda_{x,n}$ plays the role of $\Lambda$, thus $c(\mathcal G, l) = 1$ and $\lambda = d$ in  Definition~3.1 of \cite{Sznitman:Decoupling}.
There are a number of comments we would like to make before applying results derived in Section 3 of \cite{Sznitman:Decoupling}: 

(1) Even though the events $G_{x,L_n}$ in \cite{Sznitman:Decoupling} pertain to the occupancy of vertices (i.e., 
they are subsets of $\{0,1\}^{\Z^d}$), 
Theorem~3.4 in \cite{Sznitman:Decoupling} also applies in the setting when
the events $G_{x,L_n}$ pertain to the occupancy of edges (i.e., they are subsets of $\{0,1\}^{\EE^d}$),
see Theorem 2.1, Remark 2.5(3) and Corollary 2.1' of \cite{Sznitman:Decoupling}.

(2) The constant $\ld$ is taken to be $100$ in Definition~3.1 in \cite{Sznitman:Decoupling}, 
but Theorem~3.4 in \cite{Sznitman:Decoupling} works for any large enough constant $\ld$, with $l_0>\ld$ also large enough. 

(3) The events $\badseed_{x,n}$ defined by \eqref{recursive_def_A_x_n} are 
not cascading in the sense of Definition~3.1 in \cite{Sznitman:Decoupling}, 
because $(3.4)$ of \cite{Sznitman:Decoupling} only holds for $l=l_0$ rather than for all $l$ which is a multiple of $100$.
Nevertheless, the statement and the proof of Theorem~3.4 in \cite{Sznitman:Decoupling} only involve events 
$G_{x,L_n}$, with $L_n = l_0^n L_0$ for some previously fixed $L_0\geq 1$ and $l_0$ (where  $l_0$ is large enough). 

Taking the above remarks into account, we can apply Theorem~3.4 of \cite{Sznitman:Decoupling} to the events $\badseed_{x,n}$. 
In order to derive  \eqref{sznit_bound_decreasing} and \eqref{sznit_bound_increasing}
 from  Theorem~3.4 of \cite{Sznitman:Decoupling},  we choose $l_0$ large enough, 
 so that 
$u_\infty^+ \leq (1+\delta)u$, $u_\infty^- \geq (1-\delta)u$, and $l_0^{2d}\varepsilon(u_\infty^-) \leq 1/4$. 
(See, e.g., the calculations in (3.37) of \cite{Sznitman:Decoupling}.)
\end{proof}

\begin{remark}
Currently, Theorem~3.4 in \cite{Sznitman:Decoupling} (and, as a result, Theorem~\ref{thm:decoupling} of this paper) 
is proved only for increasing and decreasing events. 
It would be interesting to show that the result of Theorem~3.4 in \cite{Sznitman:Decoupling} holds for a more general class of events. 
\end{remark}

\begin{corollary}\label{cor:decoupling}
Let $d \geq 3$, $u>0$ and $\delta\in(0,1)$. 
Let $\badseed_x$ be all increasing events and $u' = (1-\delta)u$, or all decreasing events and $u' = (1+\delta)u$. 
If 
\begin{equation}\label{eq:cor:decoupling1}
\liminf_{L_0\to\infty} \sup_{x\in \GG_0\cap[0,L_n)^d} \mathbb P\left[\badseed_x(\intgraph^u)\right] = 0 ,\
\end{equation}
then there exist $l_0, L_0\geq 1$ such that for all $n\geq 0$,  
\begin{equation}\label{eq:cor:decoupling2}
\mathbb P\left[\badseed_{0,n}(\intgraph^{u'})\right] \leq 2^{-2^n} .\
\end{equation}
Moreover, if the limit in \eqref{eq:cor:decoupling1} (as $L_0\to\infty$) exists and equals to $0$, then 
there exists $C = C(d,u,\delta)<\infty$ such that the inequality \eqref{eq:cor:decoupling2} holds for all 
$l_0\geq C$ a multiple of $l(d)$, $L_0\geq C'(d,u,\delta,l_0,\badseed)$ (for some constant $C'(d,u,\delta,l_0,\badseed)$), and $n\geq 0$. 
\end{corollary}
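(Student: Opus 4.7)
The plan is to derive both assertions directly from Theorem~\ref{thm:decoupling} by choosing $l_0$ and $L_0$ large enough that the base $l_0^{2d}\sup_x\mathbb P[\badseed_x(\intgraph^u)]+\tfrac14$ appearing in \eqref{sznit_bound_decreasing}/\eqref{sznit_bound_increasing} is at most $1/2$, which immediately yields the doubly exponential estimate $2^{-2^n}$ valid for every $n\geq 0$.

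For the first assertion, I would begin by invoking Theorem~\ref{thm:decoupling} with the given $d$, $u$, $\delta$ and the appropriate monotonicity case (increasing events with $u'=(1-\delta)u$, or decreasing events with $u'=(1+\delta)u$). This furnishes a constant $C=C(d,u,\delta)<\infty$ such that for every $l_0\geq C$ that is a multiple of $\ld$ and every $L_0\geq 1$, the upper bound in \eqref{sznit_bound_decreasing} (resp.\ \eqref{sznit_bound_increasing}) holds uniformly in $n\geq 0$. Fix any such $l_0$; this freezes the prefactor $l_0^{2d}$. The $\liminf$ hypothesis \eqref{eq:cor:decoupling1} then permits us to select $L_0$ along the appropriate subsequence so that
\[
l_0^{2d}\sup_{x\in\GG_0\cap[0,L_n)^d}\mathbb P\!\left[\badseed_x(\intgraph^u)\right]\leq \tfrac14,
\]
so that the base in Theorem~\ref{thm:decoupling} is at most $1/2$, and \eqref{eq:cor:decoupling2} follows for all $n\geq 0$.

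The second assertion is proved by the same mechanism, but now with quantitative control on $L_0$: once $l_0\geq C$ (a multiple of $\ld$) is fixed, the stronger hypothesis that the limit (not merely the $\liminf$) exists and equals $0$ guarantees the existence of a threshold $C'=C'(d,u,\delta,l_0,\badseed)$ such that the displayed inequality above holds for \emph{every} $L_0\geq C'$, whence \eqref{eq:cor:decoupling2} holds for every such $L_0$ and every $n\geq 0$.

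The only point requiring care, rather than a genuine obstacle, is that the supremum is taken over the set $\GG_0\cap[0,L_n)^d$, whose cardinality $l_0^{nd}$ grows with $n$, so the hypothesis must be read as controlling this supremum uniformly in $n$. In the intended applications (see Section~\ref{subsection:seed}) the seed events are translates of a single event, hence by translation invariance of $\intgraph^u$ the probabilities $\mathbb P[\badseed_x(\intgraph^u)]$ do not depend on $x\in\GG_0$, and the supremum collapses to $\mathbb P[\badseed_0(\intgraph^u)]$, making the hypothesis transparent and the argument above routine.
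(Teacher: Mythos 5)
Your proof is correct and is exactly the argument the paper intends (the corollary is stated without a written proof, as an immediate consequence of Theorem~\ref{thm:decoupling}): fix $l_0\geq C$ a multiple of $\ld$, then choose $L_0$ (along the $\liminf$ subsequence, or beyond a threshold $C'$ in the limit case) so that $l_0^{2d}\sup_x\mathbb P[\badseed_x(\intgraph^u)]\leq 1/4$, making the base at most $1/2$. Your closing remark about the supremum over $\GG_0\cap[0,L_n)^d$ collapsing by translation invariance in the applications is also the right way to read the hypothesis.
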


\section{Seed events}\label{subsection:seed}
\noindent
In this section we  apply Corollary~\ref{cor:decoupling} to two families of (decreasing and increasing) bad events defined 
in terms of $\intgraph^{u}$. We also recursively define a similar (but simpler) family of bad events in terms of $\berbond^p$ and derive results analogous
to Corollary~\ref{cor:decoupling} for this family given that $p$ is close enough to $1$.
The corresponding seed events will be used in Section~\ref{sec:connectivity} to define good vertices in $\GG_0$. 
The good vertices will have the property that the existence of an infinite path of good vertices in $\GG_0$ implies the existence of 
an infinite path in the graph $\intgraph^u\cap\berbond^p$, as stated formally in Lemma~\ref{l:fromG0toZd}.

\medskip

We define the density of the interlacement at level $u$ (see, e.g., (1.58) in \cite{SznitmanAM}) by
\[
 m(u) = \mathbb P(0\in\mathcal I^u)
 = 1 - e^{-u/g(0)} ,\
\]
where $g$ is the Green function of the simple random walk on $\Z^d$ started at $0$. 
The function $m$ is continuous. 

Note that $x \in\mathcal I^u$ if and only if $\{x,y\} \in \intgraph^u$ for some $y \in \Z^d$, thus
 $\mathcal I^u$ is a measurable function of $\intgraph^u$.
It follows from Theorem 2.1 and Remark 2.2(4) of \cite{SznitmanAM}  
that $\intgraph^u$ is a translation invariant ergodic random subset of $\EE^d$.
By an appropriate ergodic theorem (see, e.g., Theorem~VIII.6.9 in \cite{DunfordSchwartz}), we get  
\begin{equation}\label{ergodicity}
\lim_{L \to \infty} \frac{1}{L^d} \sum_{ x \in  [0,L)^d} \mathds{1}\left( \, \exists y \in [0,L)^d\,:\; \{x,y\} \in \intgraph^u \, \right)
    \; \stackrel{\mathbb{P}\text{-a.s.}}{=} \; m(u).
\end{equation}

\subsection{Bad decreasing events}
\noindent

In this subsection we define and study a family of bad decreasing $\sigma(\Psi_\edge,~ \edge\in x+[0,2L_n)^d)$-measurable events 
$\overline E_{x,n}^u$ with (see \eqref{recursive_def_A_x_n})
\[
 \overline E_{x,n}^u
 =
 \bigcup_{  x_1,x_2\in \Lambda_{x,n} ; \, |x_1-x_2|_\infty > \frac{L_n}{\ld}  }
  \overline E_{x_1,n-1}^u \cap \overline E_{x_2,n-1}^u   \quad,\
\]
for $n\geq 1$, and $\mathbb P\left[\overline E_{0,n}^u(\intgraph^u)\right] \leq 2^{-2^n}$. 
In order to define the bad decreasing seed event $\overline E_x^u = \overline E_{x,0}^u$, 
we define its complement, the ``good'' increasing event $E_x^u = (\overline E_x^u)^c$.

\begin{definition}\label{def:Exu}
Fix $u>0$. Recall the definition of the graph $G_\omega$ in \eqref{eq:Gomega}. 
Let $E_x^u$ be the measurable subset of $\{0,1\}^{\EE^d}$ such that $\omega\in E_x^u$ iff
\begin{enumerate}[(a)]
\item \label{def_Exu_a} for all $e\in\{0,1\}^d$, the graph $G_\omega \cap(x + eL_0 + [0,L_0)^d)$ contains a connected component with at least $\frac34 m(u) L_0^d$ vertices, 
\item \label{def_Exu_b}  all of these $2^d$ components are connected in the graph $G_\omega \cap(x+[0,2L_0)^d)$. 
\end{enumerate}
\end{definition}
Note that $E_x^u$ is an increasing $\sigma(\Psi_\edge,~\edge\in x+[0,2L_0)^d)$-measurable event. 
Moreover, if $\gengraphrand(\omega)$ is a random translation invariant subset of $\EE^d$, then 
$\mathbb P[E_x^u(\gengraphrand)] = \mathbb P[E_0^u(\gengraphrand)]$ for all $x \in \Z^d$. 
\begin{lemma}\label{l:Ex}
For any $u>0$ there exists $\delta>0$ such that 
\begin{equation}\label{l:Ex_eq}
 \mathbb P[E_0^u(\intgraph^{u/(1+\delta)})] \to 1 ,\quad \mbox{as} \quad L_0\to\infty .\
\end{equation}
\end{lemma}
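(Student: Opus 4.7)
The plan is to combine continuity of the density function $m$ at $u$ with the strong connectivity lemma (Lemma~\ref{l:connectivity}) applied directly to $\intgraph^{u/(1+\delta)}$. For $\delta>0$ small, $m(u/(1+\delta))$ is only slightly less than $m(u)$, so the factor $3/4$ appearing in Definition~\ref{def:Exu}(\ref{def_Exu_a}) leaves room to absorb both the density drop from $u\to u/(1+\delta)$ and the volume loss from restricting Lemma~\ref{l:connectivity} to slightly smaller inner sub-boxes (needed so that its inflated box stays inside the prescribed outer box). Concretely, I would first pick $\varepsilon>0$ small (depending only on $d$) with $(1+2\varepsilon)^d\le 7/6$, and then use continuity of $m$ to pick $\delta>0$ with $m(u/(1+\delta))\ge \tfrac{7}{8}\,m(u)$; these choices yield $m(u/(1+\delta))\,(1+2\varepsilon)^{-d}>\tfrac{3}{4}\,m(u)$ with strictly positive slack.

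For condition~(\ref{def_Exu_a}), for each $e\in\{0,1\}^d$ I take the inner sub-box $B_e = eL_0 + [\alpha L_0,(1-\alpha)L_0)^d$ of side $R = L_0/(1+2\varepsilon)$, with $\alpha = \varepsilon/(1+2\varepsilon)$, chosen so that the $\varepsilon$-inflation of $B_e$ (as in Lemma~\ref{l:connectivity}) coincides with $eL_0+[0,L_0)^d$. Applying Lemma~\ref{l:connectivity} to $\intgraph^{u/(1+\delta)}$ on $B_e$, with probability at least $1-Ce^{-cL_0^{1/6}}$ all vertices of $\mathcal I^{u/(1+\delta)}\cap B_e$ lie in a single connected component of $\intgraph^{u/(1+\delta)}\cap(eL_0+[0,L_0)^d)$. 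By the ergodic theorem applied to $\mathcal I^{u/(1+\delta)}$ combined with translation invariance, $|\mathcal I^{u/(1+\delta)}\cap B_e|/R^d\to m(u/(1+\delta))$ in probability as $L_0\to\infty$, and our parameter choice ensures this common component contains at least $\tfrac{3}{4}\,m(u)\,L_0^d$ vertices with probability tending to $1$.

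For condition~(\ref{def_Exu_b}), I apply Lemma~\ref{l:connectivity} once more, with a suitable small $\varepsilon'>0$ depending on $\varepsilon$, to the inner box $[\alpha L_0,(2-\alpha)L_0)^d$ of side $(2-2\alpha)L_0$ inside $[0,2L_0)^d$, arranged so that its $\varepsilon'$-inflation fills the outer block. With probability at least $1-Ce^{-cL_0^{1/6}}$ all vertices of $\mathcal I^{u/(1+\delta)}$ in this inner box are connected in $\intgraph^{u/(1+\delta)}\cap[0,2L_0)^d$. Since every $B_e\subseteq[\alpha L_0,(2-\alpha)L_0)^d$, each of the $2^d$ giant components from~(\ref{def_Exu_a}) contains many vertices in this inner box, so all $2^d$ components are joined inside $[0,2L_0)^d$. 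A union bound over the $2^d+1$ connectivity events and the $2^d$ ergodic-concentration events yields \eqref{l:Ex_eq}. The main point requiring care is the geometric bookkeeping linking $\varepsilon$ and $\delta$: Lemma~\ref{l:connectivity}'s inflation shrinks the effective inner volume by the factor $(1+2\varepsilon)^{-d}$, and this loss, combined with the density drop $m(u/(1+\delta))/m(u)$, must still exceed $3/4$ — which is exactly what continuity of $m$ at $u$ guarantees.
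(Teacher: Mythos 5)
Your proof is correct and follows essentially the same route as the paper's: choose $\varepsilon$ and $\delta$ so that the density $m(u/(1+\delta))$ on a slightly shrunken sub-box still yields more than $\tfrac34 m(u)L_0^d$ vertices, verify the count via the ergodic theorem, and apply Lemma~\ref{l:connectivity} once per sub-box for condition (a) and once on a larger inner box for condition (b). One cosmetic remark: with the literal constants $(1+2\varepsilon)^d\le 7/6$ and $m(u/(1+\delta))\ge\tfrac78 m(u)$ the product bound is only $\ge\tfrac34 m(u)$ rather than strictly greater, so tighten one of the two choices slightly to leave genuine room for the ergodic fluctuations.
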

\begin{proof}[Proof of Lemma~\ref{l:Ex}] 
Let $u>0$. By the continuity of $m(u)$, we can choose $\varepsilon>0$ and $\delta>0$ so that 
\[
(1-4\varepsilon)^dm\left(\frac{u}{1+\delta}\right) > \frac34 m(u) .\
\] 
With such a choice of $\varepsilon$ and $\delta$, for $L_0\geq 1$, we obtain 
\begin{equation}\label{eq:epsilondelta}
 m\left(\frac{u}{1+\delta}\right) (L_0 - 4\lfloor \varepsilon L_0\rfloor)^d >
 \frac34 m(u) L_0^d .\
\end{equation}
Let $u' = u/(1+\delta)$. 
We consider the boxes \[B_{e}=eL_0 + [\,2\lfloor \varepsilon L_0\rfloor,L_0-2\lfloor\varepsilon L_0\rfloor\,)^d, \qquad e\in\{0,1\}^d.\]
The volume of $B_{e}$ is $|B_{e}|=(L_0 - 4\lfloor \varepsilon L_0\rfloor)^d$. 
Using \eqref{ergodicity} and \eqref{eq:epsilondelta},  we get that 
with probability tending to $1$ as $L_0\to\infty$, each of the boxes $B_{e}$, $e\in\{0,1\}^d$ contains at least
$\frac34 m(u)L_0^d$ vertices of $\mathcal I^{u'}$.

Now by Lemma~\ref{l:connectivity}, all the vertices of 
$\mathcal I^{u'}\cap B_e$ are  connected in 
$\intgraph^{u'}\cap (eL_0 + [\lfloor \varepsilon L_0\rfloor,L_0-\lfloor\varepsilon L_0\rfloor)^d)$
for all $e\in\{0,1\}^d$ with probability tending to $1$ as $L_0\to\infty$.
This shows that the event in Definition \ref{def:Exu} \eqref{def_Exu_a}  holds with probability tending to $1$ as $L_0\to\infty$.

Again by Lemma~\ref{l:connectivity}, the vertices of
$\mathcal I^{u'}\cap (eL_0 + [\lfloor \varepsilon L_0\rfloor,L_0-\lfloor\varepsilon L_0\rfloor)^d)$, $e\in\{0,1\}^d$
   are all connected in 
$\intgraph^{u'}\cap [0,2L_0)^d$. 
This, together with the previous conclusion, implies that the event in Definition \ref{def:Exu} \eqref{def_Exu_b}
 holds with probability tending to $1$ as $L_0\to\infty$.
 Hence we have established \eqref{l:Ex_eq}.
\end{proof}

\begin{corollary}\label{cor:Ex}
For each $u>0$, there exists $C = C(d,u)<\infty$ such that for all integers $l_0\geq C$ a multiple of $l(d)$ 
(see \eqref{eq:ld}), $L_0\geq C'(d,u,l_0)$ (for some constant $C'(d,u,l_0)$), and $n\geq 0$, 
\[
\mathbb P\left[\overline E_{0,n}^u(\intgraph^u)\right] \leq 2^{-2^n} .\
\]
\end{corollary}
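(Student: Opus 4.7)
The plan is to obtain Corollary~\ref{cor:Ex} as a direct application of Corollary~\ref{cor:decoupling} to the decreasing seed events $\overline E_x^u = (E_x^u)^c$, with Lemma~\ref{l:Ex} supplying the required smallness hypothesis. Since $E_x^u$ is an increasing $\sigma(\Psi_{\edge}:\edge\in x+[0,2L_0)^d)$-measurable event by Definition~\ref{def:Exu}, its complement $\overline E_x^u$ is a decreasing seed event in the sense of Section~\ref{section:notationandproperties}, and the recursive definition of $\overline E_{x,n}^u$ given in the text matches the recursion \eqref{recursive_def_A_x_n}. Thus the framework of Theorem~\ref{thm:decoupling} and of Corollary~\ref{cor:decoupling} applies verbatim to the family $\overline E^u$.

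First I would fix a $\delta = \delta(d,u) > 0$ as provided by Lemma~\ref{l:Ex}, so that $\mathbb{P}[E_0^u(\intgraph^{u/(1+\delta)})] \to 1$, and hence $\mathbb{P}[\overline E_0^u(\intgraph^{u/(1+\delta)})] \to 0$, as $L_0 \to \infty$. Since $\intgraph^{u/(1+\delta)}$ is translation invariant and the event $\overline E_x^u$ is the shift of $\overline E_0^u$ by $x \in \GG_0$, translation invariance upgrades this pointwise statement to uniform convergence over $x$:
\[
\lim_{L_0 \to \infty} \sup_{x \in \GG_0 \cap [0, L_n)^d} \mathbb{P}\left[\overline E_x^u(\intgraph^{u/(1+\delta)})\right] = 0 .
\]
This is exactly the hypothesis of the \emph{moreover} clause of Corollary~\ref{cor:decoupling}, not merely the liminf version.

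Now I would apply that clause with the decreasing family $\overline E_x^u$, taking the base level to be $u/(1+\delta)$ (playing the role of $u$ in the corollary) and the shifted level to be $u' = (1+\delta)\cdot u/(1+\delta) = u$. Corollary~\ref{cor:decoupling} then produces a constant $C = C(d,u) < \infty$ and, for each admissible $l_0$, a constant $C' = C'(d,u,l_0) < \infty$ such that
\[
\mathbb{P}\left[\overline E_{0,n}^u(\intgraph^u)\right] \leq 2^{-2^n}
\]
for every $l_0 \geq C$ that is a multiple of $\ld$, every $L_0 \geq C'$, and every $n \geq 0$, which is precisely the conclusion of Corollary~\ref{cor:Ex}. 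The argument is essentially bookkeeping: the substantive input is Lemma~\ref{l:Ex}, whose proof combined the ergodic theorem \eqref{ergodicity} with the strong connectivity Lemma~\ref{l:connectivity}. The one mildly delicate point is the bridge from the $\liminf$ hypothesis in Corollary~\ref{cor:decoupling} to its stronger moreover clause, but this is guaranteed by the honest limit statement in Lemma~\ref{l:Ex} combined with translation invariance.
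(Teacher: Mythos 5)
Your proposal is correct and is exactly the argument the paper intends: the paper's proof of this corollary is the one-line remark that it follows immediately from Corollary~\ref{cor:decoupling} and Lemma~\ref{l:Ex}, and you have filled in precisely the intended bookkeeping (decreasing seed events, base level $u/(1+\delta)$ with $u'=u$, translation invariance to pass from the limit in Lemma~\ref{l:Ex} to the uniform hypothesis of the \emph{moreover} clause).
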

\begin{proof}
Indeed, it immediately follows from Corollary~\ref{cor:decoupling} and Lemma~\ref{l:Ex}.  
\end{proof}

\subsection{Bad increasing events}
\noindent

In this subsection we define and study a family of bad increasing $\sigma(\Psi_\edge,~ \edge\in x+[0,2L_n)^d)$-measurable events 
$\overline F_{x,n}^u$ with (see \eqref{recursive_def_A_x_n})
\[
 \overline F_{x,n}^u
 =
 \bigcup_{  x_1,x_2\in \Lambda_{x,n} ; \, |x_1-x_2|_\infty > \frac{L_n}{\ld}  }
  \overline F_{x_1,n-1}^u \cap \overline F_{x_2,n-1}^u   \quad,\
\]
for $n\geq 1$, and $\mathbb P\left[\overline F_{0,n}^u(\intgraph^u)\right] \leq 2^{-2^n}$. 
In order to define the bad increasing seed event $\overline F_x^u = \overline F_{x,0}^u$, 
we define its complement, the ``good'' decreasing event $F_x^u = (\overline F_x^u)^c$.
\begin{definition}\label{def:Fxu}
Let $u>0$. 
Let $F_x^u$ be the measurable subset of $\{0,1\}^{\EE^d}$ such that $\omega\in F_x^u$ iff
for all $e\in\{0,1\}^d$, the graph $G_\omega \cap (x + eL_0 + [0,L_0)^d)$ contains at most 
$\frac 54 m(u) L_0^d$ vertices in connected components of size at least $2$, i.e.,
\begin{equation}\label{def_eq_no_isolated}
  \sum_{ y \in \, x + eL_0 + [0,L_0)^d} \mathds{1}\left( \, \exists z \in \,x + eL_0 + [0,L_0)^d   \,:\; \{y,z\} \in  G_\omega \, \right)
\leq \frac 54 m(u) L_0^d.
\end{equation}
\end{definition}
Note that $F_x^u$ is a decreasing $\sigma(\Psi_\edge,~\edge\in x+[0,2L_0)^d)$-measurable event.
Moreover, if $\mathcal \gengraphrand(\omega)$ is a random translation invariant subset of $\EE^d$, then 
$\mathbb P[F_x^u(\gengraphrand)] = \mathbb P[F_0^u(\gengraphrand)]$.

\begin{lemma}\label{l:Fx}
For any $u>0$ there exists $\delta\in(0,1)$ such that 
\begin{equation}\label{l:Fx_eq}
 \mathbb P[F_0^u(\intgraph^{u/(1-\delta)})] \to 1 ,\quad \mbox{as} \quad L_0\to\infty .\
\end{equation}
\end{lemma}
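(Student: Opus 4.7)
The plan is to prove Lemma~\ref{l:Fx} by combining the continuity of the density function $m(\cdot)$ with the ergodic theorem recorded in \eqref{ergodicity}, in close parallel to the proof of Lemma~\ref{l:Ex} but \emph{without} needing to invoke the strong connectivity lemma.

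First I would choose $\delta \in (0,1)$. Since $m(u) = 1 - e^{-u/g(0)}$ is continuous (and strictly increasing) and $m(u/(1-\delta)) \to m(u)$ as $\delta \to 0$, I can fix $\delta$ small enough that
\[
m\!\left(\frac{u}{1-\delta}\right) < \frac{9}{8}\, m(u) < \frac{5}{4}\, m(u).
\]
Set $u' = u/(1-\delta)$. The left-hand side of \eqref{def_eq_no_isolated}, evaluated at $\intgraph^{u'}$, is precisely the number of vertices $y \in eL_0 + [0,L_0)^d$ for which there exists $z$ in the same sub-box with $\{y,z\} \in \intgraph^{u'}$, i.e., the number of vertices touched by an interlacement edge fully contained in $eL_0 + [0,L_0)^d$.

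Next I would apply \eqref{ergodicity} to $\intgraph^{u'}$: using translation invariance of $\intgraph^{u'}$ to reduce from the translated sub-box $eL_0 + [0,L_0)^d$ to $[0,L_0)^d$, the ergodic theorem gives that, as $L_0 \to \infty$,
\[
\frac{1}{L_0^d}\sum_{y \in eL_0 + [0,L_0)^d} \mathds{1}\!\left(\exists z \in eL_0 + [0,L_0)^d : \{y,z\} \in \intgraph^{u'}\right)
\xrightarrow{\mathbb P\text{-a.s.}} m(u').
\]
In particular, the probability that this normalized count exceeds $\frac{5}{4}m(u)$ tends to $0$ for each fixed $e \in \{0,1\}^d$.

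Finally, a union bound over the $2^d$ choices of $e$ yields $\mathbb P[F_0^u(\intgraph^{u'})] \to 1$ as $L_0 \to \infty$, establishing \eqref{l:Fx_eq}. There is no real obstacle here: the key point is that the decreasing event $F_0^u$ is purely a counting condition on ``non-isolated'' vertices inside a box, which is exactly what the ergodic identity \eqref{ergodicity} controls, so no connectivity input beyond ergodicity is required—the continuity of $m$ does all the remaining work in calibrating $\delta$.
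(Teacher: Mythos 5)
Your proposal is correct and follows essentially the same route as the paper: choose $\delta$ via continuity of $m$ so that $m(u/(1-\delta)) < \tfrac54 m(u)$, then apply the ergodic identity \eqref{ergodicity} (together with translation invariance and a union bound over the $2^d$ sub-boxes, which the paper leaves implicit). As you note, no connectivity input is needed here, in contrast with Lemma~\ref{l:Ex}.
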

\begin{proof}[Proof of Lemma~\ref{l:Fx}] 
Let $u>0$. By the continuity of $m(u)$, we can choose $\delta>0$ so that 
\[
m\left(\frac{u}{1-\delta}\right) < \frac{5}{4} m(u) .\
\] 
Therefore, \eqref{ergodicity} implies that, with probability tending to $1$ as $L_0\to\infty$, 
the inequality \eqref{def_eq_no_isolated} with $G_\omega$ replaced by $\intgraph^{u/(1-\delta)}$ is satisfied for all $e\in\{0,1\}^d$. 
This implies \eqref{l:Fx_eq}.
\end{proof}

\begin{corollary}\label{cor:Fx}
For each $u>0$, there exists $C = C(d,u)<\infty$ such that for all integers $l_0\geq C$ a multiple of $l(d)$ 
(see \eqref{eq:ld}), $L_0\geq C'(d,u,l_0)$ (for some constant $C'(d,u,l_0)$), and $n\geq 0$, 
\[
\mathbb P\left[\overline F_{0,n}^u(\intgraph^u)\right] \leq 2^{-2^n} .\
\]
\end{corollary}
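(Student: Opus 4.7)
The plan is to mirror the proof of Corollary~\ref{cor:Ex}, but using the \emph{increasing}-events branch of Corollary~\ref{cor:decoupling} together with Lemma~\ref{l:Fx} in place of Lemma~\ref{l:Ex}. The first thing I would record is that $\overline F_x^u$ is increasing: adding edges to $G_\omega$ can only increase the count on the left-hand side of \eqref{def_eq_no_isolated} (isolated vertices may become non-isolated, never the reverse), so $F_x^u$ is decreasing and therefore $\overline F_x^u$ is increasing. Since the recursion \eqref{recursive_def_A_x_n} builds $\overline F_{x,n}^u$ out of finite unions of intersections of $\overline F_{\cdot,n-1}^u$, it follows by induction on $n$ that $\overline F_{x,n}^u$ is an increasing event at every scale, so the hypothesis of the increasing part of Corollary~\ref{cor:decoupling} is met.

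Next I would invoke Lemma~\ref{l:Fx} to produce $\delta = \delta(d,u) \in (0,1)$ such that $\mathbb P[\overline F_0^u(\intgraph^{u/(1-\delta)})] \to 0$ as $L_0 \to \infty$. Because $\intgraph^{u/(1-\delta)}$ is translation invariant, $\mathbb P[\overline F_x^u(\intgraph^{u/(1-\delta)})] = \mathbb P[\overline F_0^u(\intgraph^{u/(1-\delta)})]$ for every $x \in \GG_0$, so the supremum appearing in \eqref{eq:cor:decoupling1} equals this single probability. In particular the limit as $L_0 \to \infty$ exists and equals $0$, which is exactly the strengthened hypothesis of Corollary~\ref{cor:decoupling}.

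Finally I would apply Corollary~\ref{cor:decoupling} with reference level $u/(1-\delta)$ in place of $u$, so that the corollary's $u'$ equals $(1-\delta)\cdot u/(1-\delta) = u$. The conclusion then yields the existence of $C = C(d,u) < \infty$ such that for all integer multiples $l_0 \geq C$ of $\ld$, all $L_0 \geq C'(d,u,l_0, \overline F^u)$, and all $n \geq 0$,
\[
 \mathbb P\left[\overline F_{0,n}^u(\intgraph^u)\right] \leq 2^{-2^n},
\]
which is exactly the statement of Corollary~\ref{cor:Fx}.

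There is no real obstacle here: the conceptual work was already done in Lemma~\ref{l:Fx} and in the increasing-events half of Corollary~\ref{cor:decoupling}. The only point that merits care is the parameter matching, namely that Corollary~\ref{cor:decoupling} controls $\mathbb P[\overline F_{0,n}^u(\intgraph^{u'})]$ at the \emph{lower} intensity $u' = (1-\delta)u$ from the seed-event estimate at the \emph{higher} intensity $u$; relabelling $u \mapsto u/(1-\delta)$ is what aligns Lemma~\ref{l:Fx} with the hypothesis of Corollary~\ref{cor:decoupling} and delivers the desired bound at level $u$.
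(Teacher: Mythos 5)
Your proposal is correct and is exactly the argument the paper intends: the paper's proof is the one-line remark that the corollary ``immediately follows from Corollary~\ref{cor:decoupling} and Lemma~\ref{l:Fx}'', and you have simply spelled out that derivation, including the two points that actually need checking (that $\overline F_{x,n}^u$ is increasing at every scale, and the relabelling $u\mapsto u/(1-\delta)$ so that the decoupling inequality's $u'$ becomes $u$). No gaps.
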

\begin{proof}
Indeed, it immediately follows from Corollary~\ref{cor:decoupling} and Lemma~\ref{l:Fx}. 
\end{proof}

\subsection{Bad Bernoulli events}
\noindent

In this subsection we define and study a family of bad decreasing $\sigma(\Psi_\edge,~ \edge\in x+[0,2L_n)^d)$-measurable events 
$\overline D_{x,n}$ in the spirit of the definition \eqref{recursive_def_A_x_n}: 
\[
 \overline D_{x,n}
 =
 \bigcup_{  x_1,x_2\in \Lambda_{x,n} ; \, |x_1-x_2|_\infty > \frac{L_n}{\ld}  }
  \overline D_{x_1,n-1} \cap \overline D_{x_2,n-1}   \quad,\
\]
for $n\geq 1$, and $\mathbb P\left[\overline D_{0,n}(\berbond^p)\right] \leq 2^{-2^n}$ when $p<1$ is close enough to $1$. 
We define the bad decreasing seed event $\overline D_x = \overline D_{x,0}$ as 
the measurable subset of $\{0,1\}^{\EE^d}$ such that $\omega\in \overline D_x$ iff 
there is an edge in the box $x+[0,2L_0)^d$ which is not in $G_\omega$
(remember that an edge $\edge$ is in $x+[m,n)^d$ if both its endvertices are in $x+[m,n)^d$), i.e.,
\begin{equation}\label{def_eq_Dx}
\overline D_x= \left\{\omega\in\{0,1\}^{\EE^d}~:~ (x+[0,2L_0)^d) \cap \EE^d \nsubseteq G_\omega \right\}.
\end{equation}
Note that $\overline D_x$ is a decreasing $\sigma(\Psi_\edge,~\edge\in x+[0,2L_0)^d)$-measurable event.
Moreover, if $\mathcal \gengraphrand(\omega)$ is a random translation invariant subset of $\EE^d$, then 
$\mathbb P[\overline D_x(\gengraphrand)] = \mathbb P[\overline D_0(\gengraphrand)]$.

\begin{lemma}\label{l:Hx}
For any integers $L_0\geq 1$ and $l_0> 2 l(d)$ there exists $p<1$ such that for all $n\geq 0$, 
\[
\mathbb P \left[\overline D_{0,n}(\berbond^p)\right] \leq 2^{-2^n} .\
\]
\end{lemma}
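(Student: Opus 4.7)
The plan is to prove this by induction on $n$, exploiting the fact that, unlike the interlacement measure, $\berbond^p$ is an honest product measure, so events depending on disjoint sets of edges are exactly independent. Thus no decoupling inequality is needed — a clean recursion suffices.

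First I would verify by induction on $n$ that $\overline D_{x,n}$ is $\sigma(\Psi_\edge,\ \edge\in x+[0,2L_n)^d)$-measurable. The base case is immediate from the definition \eqref{def_eq_Dx}, and the recursive step follows from $x_1,x_2\in\Lambda_{x,n}\subseteq x+[0,L_n)^d$ together with $L_{n-1} = L_n/l_0 \leq L_n$. Next, I would observe that for the pairs $(x_1,x_2)$ appearing in the recursive definition of $\overline D_{x,n}$, the assumption $l_0 > 2 l(d)$ yields
\[
|x_1-x_2|_\infty > \frac{L_n}{l(d)} = \frac{l_0}{l(d)}\, L_{n-1} > 2 L_{n-1},
\]
so the two boxes $x_1+[0,2L_{n-1})^d$ and $x_2+[0,2L_{n-1})^d$ are disjoint. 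Consequently, under $\mathbf{P}_p$ the events $\overline D_{x_1,n-1}(\berbond^p)$ and $\overline D_{x_2,n-1}(\berbond^p)$ are independent.

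Using a union bound over the at most $|\Lambda_{0,n}|^2 = l_0^{2d}$ pairs, independence, and the translation invariance $\mathbb{P}[\overline D_{x,n-1}(\berbond^p)] = \mathbb{P}[\overline D_{0,n-1}(\berbond^p)]$, setting $q_n := \mathbb{P}[\overline D_{0,n}(\berbond^p)]$ I obtain the recursion
\[
q_n \leq l_0^{2d}\, q_{n-1}^2.
\]
This is the whole engine of the proof. Introducing $r_n := l_0^{2d} q_n$ turns it into the pleasant inequality $r_n \leq r_{n-1}^2$, whence $r_n \leq r_0^{2^n}$ and therefore
\[
q_n \leq \frac{r_0^{2^n}}{l_0^{2d}}.
\]

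It remains to choose $p$ close enough to $1$ to make $r_0 \leq 1/2$. A straightforward union bound gives
\[
q_0 = \mathbb{P}[\overline D_0(\berbond^p)] \leq \bigl|\EE^d \cap [0,2L_0)^d\bigr|\,(1-p) \leq d\,(2L_0)^d\,(1-p),
\]
so picking $p<1$ with $d(2L_0)^d (1-p) \leq 1/(2 l_0^{2d})$ yields $r_0 \leq 1/2$ and hence $q_n \leq 2^{-2^n}/l_0^{2d} \leq 2^{-2^n}$ for all $n\geq 0$. There is no real obstacle here; the only point one must be careful about is the precise numerical condition $l_0 > 2 l(d)$, which is exactly what guarantees the disjointness (and thus independence) needed to absorb the combinatorial factor $l_0^{2d}$ by squaring.
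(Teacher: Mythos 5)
Your proof is correct and follows essentially the same route as the paper's: exact independence of $\overline D_{x_1,n-1}(\berbond^p)$ and $\overline D_{x_2,n-1}(\berbond^p)$ from the disjointness of their supporting boxes (guaranteed by $l_0>2\ld$), a union bound over at most $l_0^{2d}$ pairs giving $q_n\leq l_0^{2d}q_{n-1}^2$, and a choice of $p$ making $l_0^{2d}q_0\leq 1/2$. The only cosmetic difference is that you unroll the recursion via the substitution $r_n=l_0^{2d}q_n$ while the paper iterates directly to $(l_0^{2d}q_0)^{2^n}$; these are equivalent.
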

\begin{proof}[Proof of Lemma~\ref{l:Hx}]
Since the probability of $\overline D_0(\berbond^p)$ is at most $1-p^{d (2L_0)^d}$, we can choose $p = p(L_0,l_0) < 1$ so that 
\[
 l_0^{2d} \mathbb P \left[\overline D_0(\berbond^p)\right] < 1/2 .\
\]
Note that for $x_1,x_2\in \GG_{n-1}$, $|x_1- x_2|_\infty\geq L_n/\ld$, 
the events $\overline D_{x_1,n-1}(\berbond^p)$ and $\overline D_{x_2,n-1}(\berbond^p)$ are independent and have the same probability. 
Therefore, since $|\Lambda_{x,n}| \leq l_0^d$, we get
\[
\mathbb P \left[\overline D_{0,n}(\berbond^p)\right] 
\leq 
l_0^{2d} \mathbb P \left[\overline D_{0,n-1}(\berbond^p)\right]^2 
\leq \dots \leq
\left(l_0^{2d}\right)^{1+2+\ldots+2^{n-1}}\left(\mathbb P \left[\overline D_0(\berbond^p)\right]\right)^{2^n}
\leq
\left(l_0^{2d} \mathbb P \left[\overline D_0(\berbond^p)\right]\right)^{2^n} .\
\]
The result follows from the choice of $p$. 
\end{proof}

\section{Connected components of bad boxes are small}\label{sec:connectivity}
\noindent

For $x,y\in \GG_0$, we say that $x$ and $y$ are nearest-neighbors in $\GG_0$ if $|x-y|_1 = L_0$, 
and $*$-neighbors in $\GG_0$ if $|x-y|_\infty = L_0$. 
We say that $\pi = (x(1),\ldots, x(m)) \subset \GG_0$ is a nearest-neighbor path in $\GG_0$, if 
for all $j$, $x(j)$ and $x(j+1)$ are nearest-neighbors in $\GG_0$, and 
a $*$-path in $\GG_0$, if for all $j$, $x(j)$ and $x(j+1)$ are $*$-neighbors in $\GG_0$. 

Let $u>0$ and $p\in (0,1)$. Recall the definitions of the bad seed events $\overline E_x^u=(E_x^u)^c$,
$\overline F_x^u=(F_x^u)^c$ and $\overline D_x$ from Definition \ref{def:Exu}, Definition \ref{def:Fxu} and \eqref{def_eq_Dx}, respectively.
 We say that $x\in \GG_0$ is a {\it bad} vertex if the event
\[
\overline D_x(\berbond^p) \cup \overline E_x^u(\intgraph^u) \cup \overline F_x^u(\intgraph^u)
\]
occurs. Otherwise, we say that $x$ is {\it good}.
The following lemma will be useful in the proofs of Theorems~\ref{thm:slabs} and \ref{thm:transience}. 
\begin{lemma}\label{l:fromG0toZd}
Let $x$ and $y$ be nearest-neighbors in $\GG_0$, and assume that they are both good. 

(a) Each of the graphs $(\intgraph^u\cap\berbond^p)\cap(z + [0,L_0)^d)$, with $z\in \{x,y\}$, 
contains the unique connected component $\mathcal C_z$ with at least $\frac34 m(u) L_0^d$ vertices, and

(b) $\mathcal C_x$ and $\mathcal C_y$ are connected in the graph $(\intgraph^u\cap\berbond^p)\cap ((x+[0,2L_0)^d)\cup(y+[0,2L_0)^d))$. \\
In particular, this implies that if there is an infinite nearest-neighbor path $\pi = (x_1,\ldots)$ of good vertices in $\GG_0$, then the 
set $\cup_{i=1}^\infty (x_i + [0,2L_0)^d)$ contains an infinite nearest-neighbor path of $\intgraph^u\cap\berbond^p$. 
\end{lemma}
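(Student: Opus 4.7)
The plan is to unpack the three conditions in the definition of ``good'' and then stitch them together; the intermediate goal is that, on the enlarged boxes $z+[0,2L_0)^d$ with $z\in\{x,y\}$, the bond percolation configuration $\berbond^p$ plays no role.

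First, I would observe that $x$ being good means that $D_x(\berbond^p):=\overline D_x(\berbond^p)^c$, $E_x^u(\intgraph^u)$ and $F_x^u(\intgraph^u)$ all hold, and similarly for $y$. By definition of $\overline D_x$ in \eqref{def_eq_Dx}, $D_x(\berbond^p)\cap D_y(\berbond^p)$ says that every edge in $(x+[0,2L_0)^d)\cup(y+[0,2L_0)^d)$ belongs to $\berbond^p$. Consequently, on this region the edge set of $\intgraph^u\cap\berbond^p$ coincides with that of $\intgraph^u$, which reduces both (a) and (b) to statements about $\intgraph^u$ only.

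For part (a), fix $z\in\{x,y\}$. Applying condition \eqref{def_Exu_a} of $E_z^u$ with $e=0$ yields a connected component of $\intgraph^u\cap(z+[0,L_0)^d)$ of size at least $\tfrac34 m(u)L_0^d$, which I would call $\mathcal C_z$; by the previous paragraph it is equally a connected component of $\intgraph^u\cap\berbond^p\cap(z+[0,L_0)^d)$. For uniqueness I would invoke $F_z^u$ with $e=0$: \eqref{def_eq_no_isolated} caps the total number of vertices lying in nontrivial components of $\intgraph^u\cap(z+[0,L_0)^d)$ by $\tfrac54 m(u)L_0^d$, and since $2\cdot\tfrac34 m(u)L_0^d=\tfrac32 m(u)L_0^d>\tfrac54 m(u)L_0^d$, two components of size at least $\tfrac34 m(u)L_0^d$ cannot coexist (here $L_0$ is tacitly large enough that $\tfrac34 m(u)L_0^d\geq 2$, so the component is nontrivial).

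For part (b), by symmetry I may assume $y=x+L_0 e_i$ for some standard basis vector $e_i$, so that $y+[0,L_0)^d$ is exactly the subbox $x+e_iL_0+[0,L_0)^d$ appearing in Definition~\ref{def:Exu}. Condition \eqref{def_Exu_b} of $E_x^u$ then gives a large component in each of the $2^d$ subboxes of $x+[0,2L_0)^d$, all mutually connected inside $\intgraph^u\cap(x+[0,2L_0)^d)$. The large component it provides in $y+[0,L_0)^d$ must coincide with $\mathcal C_y$ by the uniqueness from part (a) (using $F_y^u$), so $\mathcal C_x$ and $\mathcal C_y$ are connected in $\intgraph^u\cap(x+[0,2L_0)^d)=\intgraph^u\cap\berbond^p\cap(x+[0,2L_0)^d)$, establishing (b). Finally, for the ``In particular'' assertion, iterating (b) along an infinite nearest-neighbor path $(x_1,x_2,\dots)$ of good vertices shows that all $\mathcal C_{x_i}$ lie in a single connected component of $\intgraph^u\cap\berbond^p$ contained in $\bigcup_i (x_i+[0,2L_0)^d)$; since the $\mathcal C_{x_i}$ are nonempty and sit in pairwise distinct cubes escaping to infinity, this component is infinite, and being a connected subgraph of the locally finite lattice $(\Z^d,\EE^d)$ it contains an infinite nearest-neighbor path. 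The whole argument is essentially bookkeeping once the three events are unpacked; the only place that requires care is the identification of the large component from $E_x^u$ in the shared subbox with $\mathcal C_y$, which is exactly what $F_y^u$ is designed to enable.
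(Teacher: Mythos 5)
Your proof is correct and follows essentially the same route as the paper's: use $\overline D$ to make $\berbond^p$ irrelevant on the enlarged boxes, $E^u$ to produce the large components and their connection (via the observation that $y+[0,L_0)^d$ is one of the $2^d$ subboxes of $x+[0,2L_0)^d$), and $F^u$ with $2\cdot\tfrac34>\tfrac54$ for uniqueness. Your explicit identification of the component from $E_x^u$ in the shared subbox with $\mathcal C_y$, and the remark that $L_0$ must be large enough for the components to be nontrivial, only make explicit what the paper leaves tacit.
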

\begin{proof}
Let $x$ and $y$ be nearest-neighbors in $\GG_0$, and assume that they are both good. By Definition~\ref{def:Exu},  
the graphs $\intgraph^u\cap(x + [0,L_0)^d)$ and $\intgraph^u\cap(y + [0,L_0)^d)$ contain 
connected components of size at least $\frac 34 m(u)L_0^d$, which are connected in the graph 
$\intgraph^u\cap ((x+[0,2L_0)^d)\cup(y+[0,2L_0)^d))$. 

By Definition~\ref{def:Fxu}, 
each of the graphs $\intgraph^u\cap(x + [0,L_0)^d)$ and $\intgraph^u\cap(y + [0,L_0)^d)$ 
contains at most $\frac 54 m(u) L_0^d$ vertices in connected components of size at least $2$. 
Since $2\cdot \frac 34 > \frac 54$, there can be at most one connected component of size $\geq \frac 34 m(u)L_0^d$ 
in each of the graphs $\intgraph^u\cap(x + [0,L_0)^d)$ and $\intgraph^u\cap(y + [0,L_0)^d)$. 
This impies that each of the graphs $\intgraph^u\cap(z + [0,L_0)^d)$, with $z\in\{x,y\}$, 
contains the unique connected component $\mathcal C_z$ with at least $\frac 34 m(u) L_0^d$ vertices, and 
$\mathcal C_x$ and $\mathcal C_y$ are connected in the graph 
$\intgraph^u\cap ((x+[0,2L_0)^d)\cup(y+[0,2L_0)^d))$. 

Finally, by \eqref{def_eq_Dx}, $((x+[0,2L_0)^d)\cup(y+[0,2L_0)^d))\subseteq \berbond^p$. 
Therefore, all the edges of the graph $\intgraph^u\cap ((x+[0,2L_0)^d)\cup(y+[0,2L_0)^d))$ are present in $\berbond^p$. 
\end{proof}

For $x\in \GG_0$, and $M<N$ which are divisible by $L_0$, 
let $\overline{H}^*(x,M,N)$ be the event that $B(x,M)$ is connected to the boundary of $B(x,N)$ by a $*$-path of bad vertices in $\GG_0$. 
Let $\overline{H}^*(x,N) = \overline{H}^*(x,0,N)$ be the event that $x$ is connected to the boundary of $B(x,N)$ by a $*$-path of bad vertices in $\GG_0$.

\begin{lemma}\label{l:badpaths}
For any $u>0$, there exist $L_0\geq 1$, $p<1$, $c>0$ and $C<\infty$ (all depending on $u$) 
such that for all $N$ divisible by $L_0$, we have 
\begin{equation}\label{eq:badpaths}
 \mathbb P[\overline{H}^*(0,N)] \leq C e^{- N^c} .\
\end{equation}
\end{lemma}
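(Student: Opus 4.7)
The plan is to bound the event $\overline H^*(0,N)$ by a union of high-scale cascade events and then use the super-exponential probability estimates $2^{-2^n}$ on these cascades from Corollaries~\ref{cor:Ex}, \ref{cor:Fx} and Lemma~\ref{l:Hx}. The overall architecture is renormalization plus a Peierls-type union bound. First I would pick $l_0$ to be a sufficiently large multiple of $\ld$, then $L_0$ large enough that Corollaries~\ref{cor:Ex} and \ref{cor:Fx} yield $\mathbb P[\overline E_{0,n}^u(\intgraph^u)] \leq 2^{-2^n}$ and $\mathbb P[\overline F_{0,n}^u(\intgraph^u)] \leq 2^{-2^n}$ for all $n \geq 0$, and finally invoke Lemma~\ref{l:Hx} to pick $p < 1$ giving the same bound for $\mathbb P[\overline D_{0,n}(\berbond^p)]$.

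The heart of the proof is the following inductive geometric claim: there is $\alpha = \alpha(d) > 0$ such that whenever a $*$-path of bad vertices in $\GG_0$ has $l_\infty$-diameter at least $\alpha L_n$ and is contained in a box of side $2\alpha L_n$, at least one scale-$n$ cascade event from $\{\overline D_{y,n},\overline E_{y,n}^u,\overline F_{y,n}^u\}$ fires for some $y \in \GG_n$ in or adjacent to this box. I would prove this by induction on $n$. The base case $n = 0$ is immediate: any vertex of the path is bad and therefore lies in one of the three seed events. In the inductive step, I would pick $K+1 := 3\cdot 4^d + 2$ pivot vertices along the path, where the $k$-th pivot is the first vertex on the path at $l_\infty$-distance $\alpha L_n \cdot k/(K+1)$ from the starting vertex, and consider the $K = 3\cdot 4^d + 1$ consecutive subpaths between them. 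Each such subpath has $l_\infty$-diameter at least $\alpha L_n/(K+1) \geq \alpha L_{n-1}$ (using $l_0 \geq \ld \geq K+1$), so the inductive hypothesis produces a scale-$(n-1)$ cascade $\overline T_{y_i,n-1}$ of some type $T_i \in \{D,E,F\}$ anchored at $y_i \in \GG_{n-1}$ near subpath $i$. A double pigeonhole — first over the at most $4^d$ scale-$n$ boxes that can contain an anchor $y_i$ (since all anchors lie in a box of side $\leq 3\alpha L_n$) and then over the $3$ possible types — supplies indices $i \neq j$ with $T_i = T_j$ and $y_i, y_j$ lying in a common scale-$n$ box, separated by $l_\infty$-distance at least $\alpha L_n/(K+1) - O(L_{n-1})$. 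Since $\ld = 30 \cdot 4^d$ comfortably exceeds $3\cdot 4^d + 2 = K+1$, this separation strictly exceeds $L_n/\ld$, so the scale-$n$ cascade of the common type fires.

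Applying the geometric claim to the $*$-path witnessing $\overline H^*(0,N)$ — specifically, to the sub-path from $0$ to the first vertex of the original path at $l_\infty$-distance $\alpha L_{n_0}$ from the origin, where $n_0$ is the largest scale with $\alpha L_{n_0} \leq N$ — and union-bounding over the $O((N/L_0)^d)$ possible anchors $y$ and the $3$ types yields
\[
 \mathbb P[\overline H^*(0,N)] \leq C(N/L_0)^d \cdot 2^{-2^{n_0}} .
\]
Since $l_0^{n_0}$ is of order $N/L_0$, we have $2^{n_0} \geq (N/L_0)^c$ for some $c > 0$, so the right-hand side is bounded by $C e^{-N^{c'}}$ for an appropriate $c' > 0$, which is the claimed stretched-exponential decay.

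The main obstacle is the inductive geometric claim, where three conditions on the selected pair of scale-$(n-1)$ cascades must be satisfied simultaneously: common type, containment in a common scale-$n$ box, and pairwise $l_\infty$-separation exceeding $L_n/\ld$. The explicit value $\ld = 30 \cdot 4^d$ is calibrated precisely so that the double pigeonhole (over the $\leq 4^d$ relevant scale-$n$ boxes and the $3$ types) can close while leaving enough slack for the separation condition to hold automatically.
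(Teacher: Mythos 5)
Your architecture is the right one and is essentially the paper's: a multi-scale induction showing that a long $*$-path of bad vertices forces a scale-$n$ cascade event, a pigeonhole calibrated against $\ld=30\cdot 4^d$, and a final union bound converting $2^{-2^n}$ into stretched-exponential decay in $N$. (The paper runs the induction top-down by contradiction --- ``if all $4^d$ scale-$n$ vertices near the crossing are $n$-good, the $(n-1)$-bad vertices are confined to $3\cdot 4^d$ balls of radius $L_n/\ld$, so some sub-crossing sees only $(n-1)$-good vertices'' --- whereas you run it bottom-up, but these are the same argument.)

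There are, however, two concrete gaps in your inductive step as written. First, your induction hypothesis requires the path to be contained in a box of side $2\alpha L_{n-1}$, but the subpath between consecutive pivots has $l_\infty$-diameter at least $\alpha L_n/(K+1)=\alpha l_0 L_{n-1}/(K+1)$, which for $l_0\gg K+1$ is far larger than $2\alpha L_{n-1}$; so the hypothesis literally cannot be applied to these subpaths. Second, ``anchored at $y_i$ near subpath $i$'' does not give the separation $\alpha L_n/(K+1)-O(L_{n-1})$ between distinct anchors: consecutive subpaths share a pivot, and more seriously a $*$-path can loop back, so subpath $j$ may pass arbitrarily close to subpath $i$ even for $j\gg i$, and the two anchors may coincide. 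Both gaps are repaired by the same move, which is exactly what the paper does: instead of using the whole subpath, use only its initial segment up to the first exit from a small ball around its starting pivot --- i.e.\ replace ``subpath $i$ forces a scale-$(n-1)$ cascade somewhere along it'' by the annulus-crossing event $\overline{H}^*(y_i,L_{n-1},2L_{n-1})$ for centers $y_i$ chosen on concentric spheres $\partial B(0,L_n+5L_{n-1}i)$ so that the balls $B(y_i,2L_{n-1})$ are pairwise disjoint. This simultaneously guarantees that the induction hypothesis applies at scale $n-1$ and that the resulting $(n-1)$-cascades are anchored at locations with the required mutual separation $>L_n/\ld$. With that correction your pigeonhole over the $\leq 4^d$ scale-$n$ boxes and the $3$ event types closes the induction, and the rest of your argument (the reduction to annulus crossings at scale $n_0\asymp\log_{l_0}(N/L_0)$ and the final union bound) is fine.
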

\begin{proof}[Proof of Lemma~\ref{l:badpaths}]
We may assume that $N\geq 2L_0$. 
It suffices to show that for $n\geq 0$, 
\begin{equation}\label{eq:badcrossings}
\mathbb P[\overline{H}^*(0,L_n,2L_n)] \leq C e^{- L_n^c} .\
\end{equation}
Indeed, choose $n$ so that $2L_n \leq N < 2L_{n+1} = 2l_0L_n$. Then 
\[
 \mathbb P[\overline{H}^*(0,N)]\leq \mathbb P[\overline{H}^*(0,L_n,2L_n)] \leq C e^{-L_n^c} \leq C'e^{-N^{c'}} .\
\]
Let $u>0$. Choose $l_0 (> \ld)$, $L_0\geq 1$, and $p<1$ such that Corollaries \ref{cor:Ex} and \ref{cor:Fx} and Lemma~\ref{l:Hx} hold. 
For $n\geq 0$ and $x\in \GG_n$, we say that $x$ is $n$-{\it bad} if the event 
\[
\overline D_{x,n}(\berbond^p) \cup \overline E_{x,n}^u(\intgraph^u) \cup \overline F_{x,n}^u(\intgraph^u)
\]
occurs. Otherwise, we say that $x$ is $n$-{\it good}. (In particular, $x$ is $0$-bad if and only if $x$ is bad.)
By the definition of 
$\overline D_{x,n}(\berbond^p)$, $\overline E_{x,n}^u(\intgraph^u)$ and $\overline F_{x,n}^u(\intgraph^u)$, 
\begin{equation}\label{eq:ngood}
\begin{split}
\text{if} &\text{ $x\in\GG_n$ is $n$-good, then there exist at most three $(n-1)$-bad vertices}\\
&\text{$z_1,\ldots,z_s\in\GG_{n-1}\cap(x+[0,L_n)^d)$ (with $0\leq s\leq 3$) such that $|z_i-z_j|_\infty > L_n/\ld$ for all $i\neq j$.}
\end{split}
\end{equation}
In order to prove \eqref{eq:badcrossings}, it suffices to show that for all $n\geq 0$ and $x\in\GG_n$, 
\begin{equation}\label{eq:inclusion}
 \overline{H}^*(x,L_n,2L_n) \subseteq 
\bigcup_{y \in \GG_n\cap(x+[-2L_n,2L_n)^d)} 
\{y\mbox{ is } n\mbox{-bad}\} .\
\end{equation}
Indeed, since the number of vertices in $\GG_n\cap [-2L_n,2L_n)^d = \{-2L_n,-L_n,0,L_n\}^d$ equals $4^d$, 
we obtain by translation invariance that 
\[
\mathbb P[\overline{H}^*(0,L_n,2L_n)] 
\leq 4^d \left(\mathbb P[\overline D_{0,n}(\berbond^p)] + \mathbb P[\overline E_{0,n}^u(\intgraph^u)] + 
\mathbb P[\overline F_{0,n}^u(\intgraph^u)]\right)
\leq 4^d\cdot 3\cdot 2^{-2^n} 
\leq C e^{-L_n^c} .\
\]
We prove \eqref{eq:inclusion} by induction on $n$. The statement is obvious for $n=0$. 
We assume that \eqref{eq:inclusion} holds for all integers smaller than $n\geq 1$, and will show that it also holds for $n$. 
It suffices to prove the induction step for $x = 0$. 
The proof goes by contradiction. 
Assume that $\overline{H}^*(0,L_n,2L_n)$ occurs and all the vertices in $\{-2L_n,-L_n,0,L_n\}^d$ are $n$-good. 
Let $\pi$ be a $*$-path of bad vertices in $\GG_0$ from $B(0,L_n)$ to the boundary of $B(0,2L_n)$. 
Let $m_0 = \lfloor l_0/5\rfloor - 1$. 
Note that the path $\pi$ intersects the boundary of each of the boxes $B(0,L_n + 5 L_{n-1} i)$, for $i\in\{0,\ldots,m_0\}$. 
Therefore, there exist $y_0,\ldots,y_{m_0}\in\GG_{n-1}$ such that for all $i\in\{0,\ldots,m_0\}$, 
(a) $|y_i|_\infty = L_n + 5 L_{n-1} i$ and 
(b) $\pi\cap B(y_i,L_{n-1})\neq \emptyset$ 
(see Figure~\ref{fig:yi}).
\begin{figure}
\begin{center}
\vskip-1cm

\includegraphics[scale=0.3]{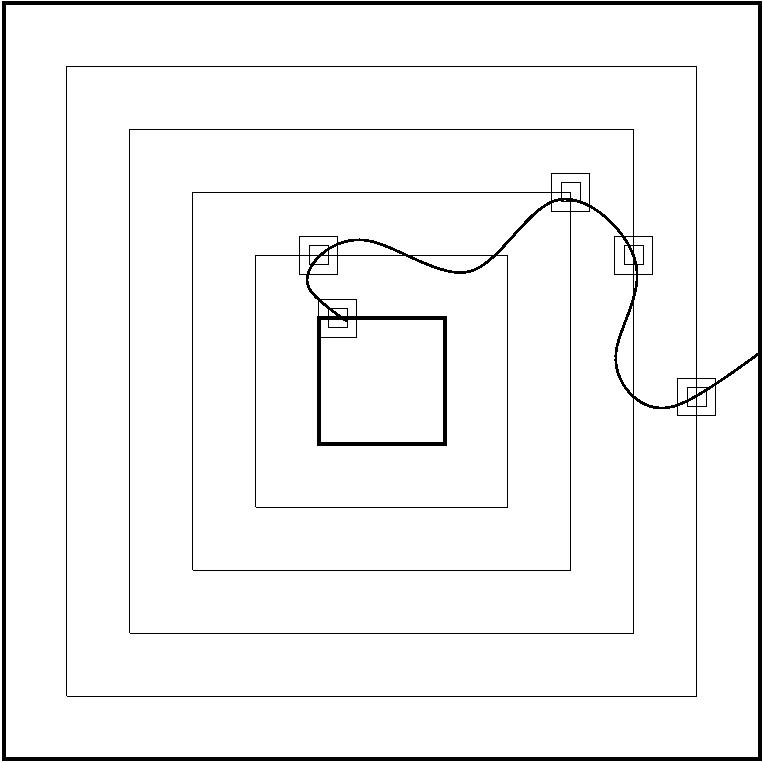}
\caption{One way to define $y_i$ is as the closest vertex in $\GG_{n-1}\cap \partial B(0,L_n+5L_{n-1}i)$ to 
the point of the first intersection of $\pi$ with $\partial B(0,L_n+5L_{n-1}i)$. 
Concentric boxes are not drawn to scale here: the innermost box is $B(0,L_n)$, the outermost box is $B(0,2L_n)$, 
and the intermediate boxes are $B(0,L_n + 5 L_{n-1} i)$, for $i\in\{1,\ldots,m_0\}$. 
The smallest and second smallest boxes along the path $\pi$ are $B(y_i,L_{n-1})$ and $B(y_i,2L_{n-1})$, respectively, for $i\in\{0,\ldots,m_0\}$.}
\label{fig:yi}
\end{center}
\end{figure}
By the definition of $m_0$ and $y_i$'s, all the boxes $B(y_i,2L_{n-1})$ are disjoint and contained in $[-2L_n,2L_n)^d$, and 
the path $\pi$ connects $B(y_i,L_{n-1})$ to the boundary of $B(y_i,2L_{n-1})$, i.e., 
the event $\overline{H}^*(y_i,L_{n-1},2L_{n-1})$ occurs for all $i\in\{0,\ldots,m_0\}$. 
We will show that 
\begin{equation}\label{eq:badpaths:induction}
\begin{split}
\text{there exists $j$ such that all the $4^d$ vertices in $\GG_{n-1}\cap(y_j + [-2L_{n-1},2L_{n-1})^d)$
are $(n-1)$-good,}
\end{split}
\end{equation} 
which will contradict our assumption that \eqref{eq:inclusion} holds for $n-1$. 

Since all the vertices in $\GG_n\cap [-2L_n,2L_n)^d$ are $n$-good by assumption, it follows from \eqref{eq:ngood} that  
\begin{equation}\label{eq:badboxes}
\begin{split}
\text{there exist } &\text{$z_1,\ldots,z_{3\cdot 4^d}\in [-2L_n,2L_n)^d$ such that}\\
&\text{all the vertices in $(\GG_{n-1}\cap[-2L_n,2L_n)^d)\setminus \cup_{i=1}^{3\cdot 4^d}B(z_i,L_n/\ld)$ are $(n-1)$-good.}
\end{split}
\end{equation}

Note that each of the balls $B(z,2L_n/\ld)$ contains at most $(4(L_n/\ld)+1)/(5 L_{n-1}) \leq l_0/\ld$ different $y_i$'s. 
Therefore, the union of the balls $\cup_{i=1}^{3\cdot 4^d}B(z_i,2L_n/\ld)$ (with $z_i$'s defined in \eqref{eq:badboxes}) 
contains at most $3\cdot 4^d \cdot l_0/\ld$ different $y_i$'s, 
which is strictly smaller than $m_0$ by the choice of $\ld$ in \eqref{eq:ld}. 
We conclude that there exists $j\in \{0,\ldots,m_0\}$ such that 
\[
y_j\notin \cup_{i=1}^{3\cdot 4^d}B(z_i,2L_n/\ld) .\
\]
We assume that $l_0$ is chosen large enough so that $L_n/\ld > 2L_{n-1}$, i.e., $l_0 > 2\ld$. 
With this choice of $l_0$, 
\begin{equation}\label{eq:ballyj}
B(y_j,2L_{n-1})\subseteq [-2L_n,2L_n)^d\setminus \cup_{i=1}^{3\cdot 4^d}B(z_i,L_n/\ld) .\
\end{equation}
Therefore, \eqref{eq:badpaths:induction} follows from \eqref{eq:badboxes} and \eqref{eq:ballyj}, 
which is in contradiction with the assumption 
that \eqref{eq:inclusion} holds for $n-1$. 
This implies that \eqref{eq:inclusion} holds for all $n\geq 0$. 
The proof of Lemma~\ref{l:badpaths} is completed.
\end{proof}

\section{Proofs of Theorem~\ref{thm:slabs} and Theorem~\ref{thm:transience}}\label{sec:proofs}
\noindent

In this section, we derive Theorems~\ref{thm:slabs} and \ref{thm:transience} 
from Lemmas~\ref{l:fromG0toZd} and \ref{l:badpaths}.

\begin{proof}[Proof of Theorem~\ref{thm:slabs}]
The two results of Theorem~\ref{thm:slabs} can be proved similarly 
(note that the results of Sections~\ref{section:notationandproperties}-\ref{sec:connectivity} can be 
trivially adapted to site percolation on $\mathcal I^u$), 
therefore we only provide a proof for the case of bond percolation on $\intgraph^u$.

Choose $L_0$ and $p<1$ such that Lemma~\ref{l:badpaths} holds.  
Remember the definitions of a bad vertex and the event $\overline{H}^*(0,N)$ from Section~\ref{sec:connectivity}. 
Let $M$ be a positive integer. 
Note that the probability that there exists a $*$-circuit of bad vertices in $\GG_0\cap (\Z^2\times\{0\}^{d-2})$ around $[0,L_0M)^2\times\{0\}^{d-2}$ is at most 
\[
 \sum_{N=M}^\infty \mathbb P[\overline{H}^*(0,L_0N)] \leq C\sum_{N=M}^\infty e^{-N^c} \leq 1/2 ,\
\]
for large enough $M$. If there is no such circuit, then, by planar duality (see, e.g., \cite[Chapter~3.1]{Grimmett}), 
there is a nearest-neighbor path $\pi = (x_0,x_1,\ldots)$
of good vertices in $\GG_0\cap (\Z^2\times\{0\}^{d-2})$ that connects $[0,L_0M)^2\times\{0\}^{d-2}$ to infinity. 
Namely, for all $i$, $x_i\in \GG_0\cap (\Z^2\times\{0\}^{d-2})$, 
$|x_i - x_{i+1}|_1 = L_0$, $x_i$ is good, $x_0\in [0,L_0M)^2\times\{0\}^{d-2}$, 
and $|x_n|_\infty\to\infty$ as $n\to\infty$. 
It follows from Lemma~\ref{l:fromG0toZd} that the graph 
$\{\edge~:~\edge\in x + [0,2L_0)^d~\mbox{for some } x\in \pi\} \subset \Z^2\times [0,2L_0)^{d-2}$ 
contains an infinite connected component of $\intgraph^u\cap \berbond^p$. 
Therefore, the probability that an infinite nearest-neighbor path in $\intgraph^u\cap \berbond^p$ visits 
$[0,L_0M + 2 L_0)^2\times[0,2L_0)^{d-2}$ is at least $1/2$. 
By the ergodicity of $\intgraph^u\cap \berbond^p$, 
an infinite nearest-neighbor path in $(\intgraph^u\cap \berbond^p)\cap(\Z^2\times [0,2L_0)^{d-2})$ exists with probability $1$. 
\end{proof}

\begin{proof}[Proof of Theorem~\ref{thm:transience}]
We will use the main result of \cite{PP:Transience} that for an infinite graph $G=(V,E)$ and i.i.d. positive random variables $R_\edge$, $\edge\in E$, 
the following statements are equivalent: (a) almost surely, the electric network $\{R_\edge~:~\edge\in E\}$ is transient, and 
(b) for some $p<1$, independent bond percolation on $G$ with parameter $p$ contains with positive probability a cluster 
on which simple random walk is transient. (In the proof, we will only use the easy implication, namely, that (b) implies (a).)

Therefore, in order to prove Theorem~\ref{thm:transience}, it suffices to show that for some $p<1$, with positive probability, the graph 
$\intgraph^u \cap \berbond^p$ contains a transient subgraph. 
The proof of this fact is similar to the proof of Theorem~1 in \cite{RS:Transience}, so we only give a sketch here.  

Let $d\geq 3$ and $u>0$. 
Denote by $S^d$ the $d$-dimensional Euclidean unit sphere.  
We will show that, for any $\varepsilon\in (0,1)$, 
there exists an event $\mathcal H$ of probability $1$ such that if $\mathcal H$ occurs, then 
\begin{equation}\label{eq:percinparaboloids}
\begin{split}
\text{the graph $\intgraph^u \cap \berbond^p$} &\text{ contains an infinite connected subgraph}\\
&\text{which, for each $v\in S^d$, contains an infinite path in the set $\cup_{n=1}^{\infty} B(nv, 2n^{\varepsilon} )$.}
\end{split}
\end{equation}
(The set $\bigcup_{n=1}^{\infty} B(nv, 2n^{\varepsilon} )$ is roughly shaped like a paraboloid with an axis parallel to $v$.)
After that, one can proceed, as in Section~3 of \cite{RS:Transience}, 
to show that this infinite connected subgraph of $\intgraph^u \cap \berbond^p$ is transient. 

Remember the definitions of the bad vertex and the event $\overline{H}^*(x,N)$
from Section~\ref{sec:connectivity}.
Let $L_0$ and $p<1$ satisfy Lemma~\ref{l:badpaths}.
By \eqref{eq:badpaths} and the Borel-Cantelli lemma, for any $\varepsilon \in (0,1)$, the following event $\mathcal H$ has probability $1$:  
there exists a (random) $m$ such that for all $x\in \GG_0$ with $|x|_\infty \geq m L_0$, 
the event $\overline{H}^*(x, |x|_\infty^\varepsilon)$
does not occur. 
It remains to show that if the event $\mathcal H$ occurs, then \eqref{eq:percinparaboloids} holds. 

We will first prove that the event $\mathcal H$ implies that 

(a) for each $v\in S^d$, there is a nearest-neighbor path $\pi_v$ of good vertices in 
$\GG_0\cap \cup_{n=1}^{\infty} B(nv, n^{\varepsilon} )$ that connects $B(0,m L_0)$ to infinity, and 

(b) all the paths $\pi_v$ are connected by nearest-neighbor paths of good vertices in $\GG_0\cap B(0,2 m L_0)$. \\
Indeed, assume first that (a) fails, i.e., there exists $v\in S^d$ such that 
the set of vertices $y\in \GG_0\cap \cup_{n=1}^{\infty} B(nv, n^{\varepsilon} )$
connected to $B(0,m L_0)$ by a nearest-neighbor path of good vertices in $\GG_0\cap\cup_{n=1}^{\infty} B(nv, n^{\varepsilon} )$ 
is finite. 
By \cite[Lemma~2.1]{DP96} or \cite[Theorem~3]{timar}, the boundary of this set contains a $*$-connected subset $\mathcal S$
of bad vertices in $\GG_0\cap\cup_{n=1}^{\infty} B(nv, n^{\varepsilon} )$ 
such that any nearest-neighbor path from $B(0,m L_0)$ to infinity in $\GG_0\cap\cup_{n=1}^{\infty} B(nv, n^{\varepsilon} )$ intersects $\mathcal S$. 
In particular, there exists $x\in \GG_0$ with $|x|_\infty \geq m L_0$, such that 
the event $\overline{H}^*(x, |x|_\infty^\varepsilon)$ occurs; 
and, therefore, the event $\mathcal H$ does not occur. 

Similarly, if (a) holds and (b) fails, then there exist at least two disjoint connected components of good vertices 
of diameter $\geq m L_0$ in $\GG_0\cap (B(0, 2 m L_0)\setminus B(0, m L_0-1))$ that intersect $B(0, m L_0)$. 
Therefore, by \cite[Lemma~2.1]{DP96} or \cite[Theorem~3]{timar}, 
there exists $x\in \GG_0$ with $|x|_\infty = m L_0$ such that the event $\overline{H}^*(x, m L_0)$ occurs. 
This again implies that the event $\mathcal H$ does not occur. 

It remains to notice that by (a), (b) and Lemma~\ref{l:fromG0toZd}, the occurence of $\mathcal H$ implies  
\eqref{eq:percinparaboloids}. 
Indeed, 
Lemma~\ref{l:fromG0toZd} and (a) imply that there is an infinite 
path of $\intgraph^u \cap \berbond^p$ in every set $\bigcup_{n=1}^{\infty} B(nv, 2n^{\varepsilon} )$, $v\in S^d$,  
and Lemma~\ref{l:fromG0toZd} and (b) imply that 
all these infinite paths are in the same connected subgraph of $\intgraph^u \cap \berbond^p$. 

Therefore, we have constructed the event $\mathcal H$ of probability $1$ which implies \eqref{eq:percinparaboloids}. 
In order to show that the infinite cluster in \eqref{eq:percinparaboloids} is transient, 
we proceed identically to the proof of Theorem~1 in Section~3 of \cite{RS:Transience}. 
We omit further details. 
\end{proof}

\section{Proof of Theorem~\ref{thm:Vuvarepsilon}}\label{sec:vacantset}

In this section we prove Theorem~\ref{thm:Vuvarepsilon}. 
The first statement of Theorem~\ref{thm:Vuvarepsilon} is proved in Section~\ref{sec:thm:Vuvarepsilon:existence}. 
In Section~\ref{sec:def:overlineuL} we state Theorem~\ref{thm:u*varepsilon}, which 
implies the second statement of Theorem~\ref{thm:Vuvarepsilon}. 
The result of Theorem~\ref{thm:u*varepsilon} is more general than the one of Theorem~\ref{thm:Vuvarepsilon}, 
since it also provides explicit upper and lower bounds on $u_*(\varepsilon)$, as $\varepsilon\to 0$. 
We prove Theorem~\ref{thm:u*varepsilon} in Section~\ref{sec:proof:u*varepsilon}.

\subsection{Existence of phase transition}\label{sec:thm:Vuvarepsilon:existence}

In this section we prove the first statement of Theorem~\ref{thm:Vuvarepsilon}. It follows from the next lemma. 
\begin{lemma}\label{l:coupling}
For any $0<u<u'$ and $\varepsilon\in(0,1/2)$, 
the set $\mathcal V^{u',\varepsilon}$ is stochastically dominated by $\mathcal V^{u,\varepsilon}$. 
In particular, for any $u> u_*(\varepsilon)$, almost surely, 
the set $\mathcal V^{u,\varepsilon}$ does not contain an infinite connected component. 
\end{lemma}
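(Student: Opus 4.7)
The plan is to construct an explicit monotone coupling of the family $\{\mathcal V^{u,\varepsilon}\}_{u>0}$ on a single probability space. By Theorem~1.1 of \cite{SznitmanAM}, the Poisson measures $\mu(u)$, $u>0$, can be coupled so that $\mathcal I^u\subseteq\mathcal I^{u'}$, equivalently $\mathcal V^{u'}\subseteq\mathcal V^u$, almost surely whenever $u<u'$. I would enlarge this space to carry an independent family $(U_x)_{x\in\Z^d}$ of i.i.d.\ uniform random variables on $[0,1]$, and for every $u>0$ declare
\[
 x\in\mathcal V^{u,\varepsilon}\iff
 \begin{cases} U_x\le 1-\varepsilon, & \text{if } x\in\mathcal V^u,\\ U_x\le\varepsilon, & \text{if } x\in\mathcal I^u.\end{cases}
\]
Since conditional on $\mathcal I^u$ the variables $U_x$ are still i.i.d.\ uniform, and the above thresholds are precisely $\mathbb{P}[x\in\mathcal V^{u,\varepsilon}\mid\mathcal I^u]$, the constructed set has the joint law specified in Section~\ref{sec:model}.

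Next I verify pointwise domination in this coupling. Fix $u<u'$ and suppose $x\in\mathcal V^{u',\varepsilon}$. If $x\in\mathcal V^{u'}$, then also $x\in\mathcal V^u$ and both membership conditions reduce to $U_x\le 1-\varepsilon$, which is satisfied; hence $x\in\mathcal V^{u,\varepsilon}$. If instead $x\in\mathcal I^{u'}$, then $U_x\le\varepsilon$, and the hypothesis $\varepsilon<1/2$ gives $\varepsilon\le 1-\varepsilon$, so $U_x\le 1-\varepsilon$ as well; in this case, regardless of whether $x$ belongs to $\mathcal V^u$ or $\mathcal I^u$, the corresponding threshold is met and $x\in\mathcal V^{u,\varepsilon}$. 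Thus $\mathcal V^{u',\varepsilon}\subseteq\mathcal V^{u,\varepsilon}$ almost surely under this coupling, which yields the stochastic domination.

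The second assertion then follows by a routine monotonicity argument. The event that $\mathcal V^{u,\varepsilon}$ contains an infinite connected component is increasing in $\mathcal V^{u,\varepsilon}$, so stochastic domination renders the function $u\mapsto\mathbb{P}[0\leftrightarrow\infty\text{ in }\mathcal V^{u,\varepsilon}]$ non-increasing. For any $u>u_*(\varepsilon)$, the definition of $u_*(\varepsilon)$ as an infimum supplies some $u_0\in(u_*(\varepsilon),u)$ with $\mathbb{P}[0\leftrightarrow\infty\text{ in }\mathcal V^{u_0,\varepsilon}]=0$; stochastic domination at the pair $(u_0,u)$ transfers this to $u$, and the union bound $\mathbb{P}[\exists\text{ infinite cluster in }\mathcal V^{u,\varepsilon}]\le\sum_{x\in\Z^d}\mathbb{P}[x\leftrightarrow\infty\text{ in }\mathcal V^{u,\varepsilon}]$ combined with translation invariance finishes the proof. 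The only essential use of the hypothesis $\varepsilon\in(0,1/2)$ is the threshold comparison $\varepsilon\le 1-\varepsilon$, so I do not anticipate any substantive obstacle.
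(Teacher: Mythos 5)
Your proof is correct and follows essentially the same route as the paper: both build on Sznitman's monotone coupling of $(\mathcal I^u)_{u>0}$ and then realize the $\varepsilon$-noise, simultaneously for all $u$, as a monotone function of $\mathds{1}(x\in\mathcal I^u)$ using auxiliary randomness independent of the interlacement (the paper takes $\varphi_x=\max(\xi_x,\eta_x\mathds{1}(x\in\mathcal I^u))$ with Bernoulli parameters $\varepsilon$ and $(1-2\varepsilon)/(1-\varepsilon)$, whereas you use a single uniform $U_x$ with thresholds $\varepsilon$ and $1-\varepsilon$; the condition $\varepsilon<1/2$ enters in both in the same way). The deduction of the second assertion from the domination is also as intended.
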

\begin{proof}
Note that by the construction of $(\mathcal I^u)_{u>0}$, on the same probability space in \cite[(1.53)]{SznitmanAM}, 
the set $\mathcal I^{u}$ is stochastically dominated by $\mathcal I^{u'}$ for $u<u'$. 

Let $\varepsilon\in(0,1/2)$. 
Let $\xi_x$, $x\in\Z^d$, be independent Bernoulli random variables with parameter $\varepsilon$, and 
$\eta_x$, $x\in\Z^d$, independent Bernoulli random variables with parameter $(1-2\varepsilon)/(1-\varepsilon)$, 
the two families are mutually independent, and also independent from the random interlacement $\mathcal I^u$. 
Let $\varphi_x = \max(\xi_x,\eta_x\mathds{1}(x\in\mathcal I^u))$. It is easy to see that 
given $\mathcal I^u$, the $\varphi_x$ are independent, 
and the probability that $\varphi_x = 1$ equals $\varepsilon$ for $x\in \mathcal V^u$, and $(1-\varepsilon)$ for $x\in\mathcal I^u$. 
Therefore, the set of vertices $\{x\in\Z^d~:~\varphi_x = 1\}$ has the same distribution as $\mathcal I^{u,\varepsilon}$. 
Since, for $u<u'$, $\mathcal I^u$ is stochastically dominated by $\mathcal I^{u'}$, 
we deduce that $\mathcal I^{u,\varepsilon}$ is stochastically dominated by $\mathcal I^{u',\varepsilon}$, 
and, therefore, $\mathcal V^{u',\varepsilon}$ is stochastically dominated by $\mathcal V^{u,\varepsilon}$. 
\end{proof}

\subsection{Phase transition is non-trivial}\label{sec:def:overlineuL}

In this section
we state that for small enough $\varepsilon>0$, $u_*(\varepsilon)\in(0,\infty)$ and give 
explicit upper and lower bounds on $u_*(\varepsilon)$, as $\varepsilon \to 0$.
The main result of this section is Theorem~\ref{thm:u*varepsilon}, which will be proved in Section~\ref{sec:proof:u*varepsilon}. 
In order to state the theorem, we need to define the critical thresholds $\overline u$ and $u_{**}$. 

\begin{remark}
The earlier version of this paper contained a different proof of the fact that 
$u_*(\varepsilon)\in(0,\infty)$. 
It was based on a new notion of the so-called strong supercriticality in slabs. 
That proof is available in the first version of this paper on the arXiv \cite{RS:Disordered:v1}. 
The proof we present here is significantly simpler and relies on recent local uniqueness results of \cite{DRS}. 
\end{remark}

\begin{definition}\label{def:tildeu}
Let $d\geq 3$. Let $\overline u = \overline u(d)$ be the supremum over all $u'$ such that for each $u$ smaller than $u'$, 
there exist constants $c = c(d,u)>0$ and $C = C(d,u)<\infty$ such that for all $n\geq 1$, 
we have 
\begin{equation}\label{eq:tildeu:1}
\mathbb P\left[
B(0,n) \leftrightarrow \infty \mbox{ in } \mathcal V^u
\right]
\geq 
1 - C e^{-n^c} ,\
\end{equation}
and 
\begin{equation}\label{eq:tildeu:2}
\mathbb P\left[
\begin{array}{c}
\text{any two connected subsets of $\mathcal V^u\cap B(0,n)$ with}\\
\text{diameter $\geq n/10$ are connected in $\mathcal V^u\cap B(0,2n)$}
\end{array}
\right]
\geq 1 - C e^{-n^c} .\
\end{equation}
\end{definition}
Note that Definition~\ref{def:tildeu} implicitly implies that the right hand side of \eqref{eq:tildeu:1} 
must be positive for all $u<\overline u$ and large enough $n$. In particular, we conclude that $\overline u \leq u_* < \infty$. 
It was recently proved in \cite[Theorem~1.1]{DRS} (and, for $d\geq 5$, earlier in \cite[(1.2) and (1.3)]{Teixeira}) that 
\begin{equation}\label{eq:tildeu:DRS}
\overline u > 0 \mbox{ for all } d\geq 3 .\
\end{equation}
Let us also recall the definition of $u_{**}$ from \cite[(0.6)]{Sznitman:Upperbound} and \cite[(0.10)]{Sznitman:Decoupling}: 
\begin{equation}\label{def:u**}
u_{**}= \inf \left\{ u \geq 0 \; : \; \liminf_{L \to \infty} \mathbb{P}\left[ 
\begin{array}{c}
\text{$B(0,L)$ is connected to the boundary of $B(0,2L)$}\\
\text{by a nearest-neighbor path in $\mathcal V^u$}
\end{array}
\right]
=0 \right\} .\
\end{equation}
It follows from \cite{Sznitman_Sidoravicius_cpam,SznitmanAM,Sznitman:Decoupling} 
that
\[
u_* \leq u_{**} < \infty \mbox{ for all } d\geq 3 .\
\] 
We prove the following theorem.
\begin{theorem}\label{thm:u*varepsilon}
Let $d\geq 3$. We have
\begin{equation}\label{eq:u*varepsilon}
0<\overline u \leq \liminf_{\varepsilon \to 0}  u_*(\varepsilon) 
\leq 
\limsup_{\varepsilon \to 0}  u_*(\varepsilon) \leq u_{**} <\infty .\
\end{equation}
\end{theorem}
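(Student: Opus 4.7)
The two middle inequalities are what matter: $0<\overline u$ is \eqref{eq:tildeu:DRS} and $u_{**}<\infty$ was recalled after \eqref{def:u**}. I shall prove both by copying the good-block renormalization of Sections~\ref{subsection:seed}--\ref{sec:connectivity}, now with vacant-set-based bad seeds (to which Corollary~\ref{cor:decoupling} is applied), combined with an independent site-noise seed event
\[
\overline D_x^\varepsilon=\{\#\{y\in x+[0,2L_0)^d\,:\,\theta_y=1\}>2\varepsilon(2L_0)^d\}
\]
whose renormalized versions decay super-exponentially by Chernoff and the independence of the $\theta_y$ across disjoint blocks, exactly as in Lemma~\ref{l:Hx}. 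A vertex $x\in\GG_0$ will be called good if neither bad seed occurs, and the conclusion about $\mathcal V^{u,\varepsilon}$ will be read off from the geometry of good blocks through Lemma~\ref{l:badpaths} and the Peierls/duality step of the proof of Theorem~\ref{thm:slabs}.

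For the lower bound I fix $u<\overline u$ and pick $\delta>0$ with $u_0:=u/(1-\delta)<\overline u$. The bad vacant seed $\overline E_x^u$ (increasing in $\intgraph^u$) is the event that the condition of Definition~\ref{def:Exu}, restated with $\mathcal V^u$ in place of $\intgraph^u$ and threshold $c(u)L_0^d$ for an appropriately small $c(u)>0$, fails in $x+[0,2L_0)^d$: some sub-block $x+eL_0+[0,L_0)^d$ lacks a $\mathcal V^u$-component of size $\ge c(u)L_0^d$, or two such large components in neighbouring sub-blocks are not joined inside $\mathcal V^u\cap(x+[0,2L_0)^d)$. By \eqref{ergodicity} and the two high-probability events \eqref{eq:tildeu:1}--\eqref{eq:tildeu:2} of Definition~\ref{def:tildeu} applied at $u_0<\overline u$ (using $\mathcal V^{u_0}\supseteq\mathcal V^u$), $\mathbb P[\overline E_0^u(\intgraph^{u_0})]\to 0$ as $L_0\to\infty$, so Corollary~\ref{cor:decoupling} (increasing case) yields $\mathbb P[\overline E_{0,n}^u(\intgraph^u)]\le 2^{-2^n}$. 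In a good block the spanning $\mathcal V^u$-cluster loses at most $2\varepsilon(2L_0)^d$ vertices to the noise, so for $\varepsilon$ small enough relative to $c(u)$ a connected majority of it survives inside $\mathcal V^{u,\varepsilon}$, and the local-uniqueness clause of $\overline E_x^u$ welds the surviving pieces of adjacent good blocks together; this is the Lemma~\ref{l:fromG0toZd} analogue that, through the Peierls/duality argument of Theorem~\ref{thm:slabs}, produces an infinite cluster of $\mathcal V^{u,\varepsilon}$.

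For the upper bound I fix $u>u_{**}$, pick $\delta>0$ with $u_0:=u/(1+\delta)>u_{**}$, and take the decreasing (in $\intgraph^{u_0}$) seed
\[
\overline M_x^{u_0}=\{B(x,L_0)\leftrightarrow \partial B(x,2L_0)\text{ in }\mathcal V^{u_0}\}.
\]
By \eqref{def:u**} and $u_0>u_{**}$, $\mathbb P[\overline M_0^{u_0}(\intgraph^{u_0})]\to 0$ as $L_0\to\infty$, whence Corollary~\ref{cor:decoupling} (decreasing case) gives $\mathbb P[\overline M_{0,n}^{u_0}(\intgraph^u)]\le 2^{-2^n}$. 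In a good block every $\mathcal V^u$-component meeting $B(x,L_0)$ stays inside $B(x,2L_0)$ (since $\mathcal V^u\subseteq\mathcal V^{u_0}$), so any crossing of $\mathcal V^{u,\varepsilon}$ from $B(x,L_0)$ to $\partial B(x,2L_0)$ has to string together several such components using the at most $2\varepsilon(2L_0)^d$ noise vertices available in $B(x,2L_0)$; a Peierls count over the placements of these bridges, combined with the diameter bounds for $\mathcal V^{u_0}$-components inherited from Corollary~\ref{cor:decoupling} at intermediate scales, excludes such a crossing once $\varepsilon$ is chosen small enough after $L_0$. An infinite $\mathcal V^{u,\varepsilon}$-cluster would then force an arbitrarily long $*$-path of bad blocks, and Lemma~\ref{l:badpaths} plus Borel--Cantelli give $\mathbb P[0\leftrightarrow\infty\text{ in }\mathcal V^{u,\varepsilon}]=0$.

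The main obstacle will be the good-block analysis for the upper bound: the crossing event for $\mathcal V^{u,\varepsilon}$ is not monotone in $\intgraph^u$ (the noise adds vacant vertices precisely where $\intgraph^u$ is large), so Corollary~\ref{cor:decoupling} can only be applied to the monotone surrogate $\overline M_x^{u_0}$, and the noise-assisted part of a putative crossing must be ruled out by hand. Making the Peierls count quantitative will force the customary ``$L_0$ large first, then $\varepsilon$ small'' order of quantifiers, and the gap $\delta$ in $u_0=u/(1+\delta)$ is what lets the super-exponential decay of $\overline M_{0,n}^{u_0}(\intgraph^u)$ absorb the combinatorial entropy of the possible noise bridges. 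The lower bound, by contrast, is essentially a mechanical copy of the scheme of Sections~\ref{sec:connectivity}--\ref{sec:proofs}, its only new input being the local-uniqueness result \eqref{eq:tildeu:2} from \cite{DRS}.
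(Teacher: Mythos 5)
Your skeleton is the paper's: reduce \eqref{eq:u*varepsilon} to showing that $\mathcal V^u\setminus\mathcal B^\varepsilon$ percolates for $u<\overline u$ and that $\mathcal V^u\cup\mathcal B^\varepsilon$ does not for $u>u_{**}$, run the renormalization of Sections~\ref{subsection:seed}--\ref{sec:connectivity} with vacant-set seeds (using \eqref{eq:tildeu:1}--\eqref{eq:tildeu:2} at a slightly perturbed level as the substitute for Lemma~\ref{l:connectivity}, and \eqref{def:u**} for the subcritical seed), and finish with Lemma~\ref{l:badpaths} and duality. The decoupling step, the monotonicity bookkeeping ($u_0=u/(1\mp\delta)$), and the identification of the two thresholds are all correct.

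The genuine gap is your treatment of the noise inside a good block. The paper's condition is simply $(x+[0,2L_0)^d)\cap\mathcal B^\varepsilon=\emptyset$ (Definition~\ref{def:good}(iii), resp.\ condition (b) in the subcritical part): a good block contains \emph{no} noise vertex, so $\mathcal V^{u,\varepsilon}$ coincides with $\mathcal V^u$ there and both implications (welding of large clusters, absence of annulus crossings) are immediate; the price is only that $\varepsilon$ must be taken small after $L_0$ is fixed, which costs nothing. Your threshold $2\varepsilon(2L_0)^d$ instead forces two steps that are not proved and, as stated, do not hold. For the lower bound, "the spanning cluster loses at most $2\varepsilon(2L_0)^d$ vertices, so a connected majority survives" is false as a deterministic statement: deleting even one vertex can shatter a connected graph, and nothing in Definition~\ref{def:tildeu} gives the quantitative well-connectedness (isoperimetry of the cluster) that would be needed to salvage it. For the upper bound, your good-block event does not exclude a $\mathcal V^{u,\varepsilon}$-crossing of the annulus, since a single noise vertex can bridge two $\mathcal V^{u}$-components of diameter comparable to $L_0$ and a handful of such bridges suffice to cross; the "Peierls count over the placements of these bridges" is exactly the missing argument, and you do not indicate how the entropy of bridge positions inside a single block would be beaten. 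Note also that with the quantifier order you yourself adopt ($L_0$ first, then $\varepsilon\to0$) one has $2\varepsilon(2L_0)^d<1$ eventually, so your seed degenerates into "no noise vertex in the block" anyway --- the correct move is to define it that way from the start and discard the bridge analysis. Two smaller omissions: you need the analogue of $F^u_x$ (the upper density bound $\frac54\eta(u)L_0^d$ of Definition~\ref{def:good}(ii)) to guarantee uniqueness of the large cluster in each sub-box, without which adjacent good blocks need not weld consistently; and the ergodic input must be \eqref{eq:etaergodicity} for the density $\eta(u)$ of the infinite vacant cluster together with its continuity on $[0,u_*)$, not \eqref{ergodicity} for $m(u)$.
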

\begin{remark}\label{rem:conjecture:u*varepsilon}
It would be interesting to understand whether the phase transition of $\mathcal V^u$ is actually 
stable with respect to small random noise. 
In other words, is it true that 
\begin{equation}\label{eq:conjecture:u*varepsilon}
\lim_{\varepsilon\to 0} u_*(\varepsilon) = u_* ?
\end{equation}
Based on \eqref{eq:u*varepsilon}, an affirmative answer to \eqref{eq:conjecture:u*varepsilon} will be obtained  
as soon as one proves that 
\begin{equation}\label{eq:relations:u*}
\overline u = u_{*}=u_{**} .\
\end{equation}
Note that the thresholds $\overline u$ and $u_{**}$ are defined purely in terms of $\mathcal V^u$, 
and not $\mathcal V^{u,\varepsilon}$. 
The statement \eqref{eq:relations:u*} is about local connectivity properties of sub- and supercritical phases of $\mathcal V^u$. 
In the context of Bernoulli percolation, similar thresholds can be defined, and 
it is known that they coincide with the threshold for the existence of an infinite component 
(see, e.g., \cite[(5.4) and (7.89)]{Grimmett}), i.e., the analogue of \eqref{eq:relations:u*} holds. 
The main challenge in proving \eqref{eq:relations:u*} comes from the long-range dependence in $\mathcal V^u$ and 
the lack of the so-called BK-inequality (see, e.g., \cite[(2.12)]{Grimmett}), and hence it is interesting in its own. 
\end{remark}

\subsection{Proof of Theorem~\ref{thm:u*varepsilon}}\label{sec:proof:u*varepsilon}

Recall the definition of $\mathcal B^\varepsilon$ from the beginning of Section~\ref{sec:mainresults}. 
In order to prove \eqref{eq:u*varepsilon}, it suffices to show that 
\begin{align}
\label{stable_supercritical}
\forall\, u<\overline u \quad \exists \, \varepsilon_0(u)>0  \quad \forall \, 
\varepsilon < \varepsilon_0(u)\, : \quad 
&\mathbb{P} [ 0 \stackrel{\mathcal{V}^u \setminus \mathcal{B}^{\varepsilon}}{\longleftrightarrow} 
\infty ]>0,\quad \mbox{and} \\
\label{stable_subcritical}
\forall\, u>u_{**} \quad \exists \, \varepsilon_0(u)>0  \quad \forall \, 
\varepsilon < \varepsilon_0(u) \, : \quad &\mathbb{P} [ 0 \stackrel{\mathcal{V}^u \cup \mathcal{B}^{\varepsilon}}{\longleftrightarrow} 
\infty ]=0 .\
 \end{align}
The proofs of these statements are very similar to the proof of Theorem~\ref{thm:slabs}. 
Therefore, we only sketch the main ideas here. 

We begin with the proof of \eqref{stable_supercritical}. 
Let 
\begin{equation}\label{def:etau}
\eta(u) = \mathbb P\left[0\leftrightarrow \infty\mbox{ in }\mathcal V^u \right] .\
\end{equation}
Note that $u_{*} = \inf\{u\geq 0~:~\eta(u) = 0\}$. 
It follows from \cite[Corollary~1.2]{Teixeira_AAP} that 
\begin{equation}\label{eq:etacont}
\text{$\eta(u)$ is continuous on $[0,u^*)$.}
\end{equation}
\begin{definition}\label{def:vacantk}
For $u>0$ and $k\geq 0$, 
let $\mathcal V^u_k$ be the subset of vertices of $\mathcal V^u$ which are 
in connected components of diameter $\geq k$ in $\mathcal V^u$. 
\end{definition}
By \eqref{def:etau} and Definition~\ref{def:vacantk}, 
$\mathbb P[0\in \mathcal V^u_k]\geq \eta(u)$ and $\mathbb P[0\in\mathcal V^u_k] \to \eta(u)$ as $k\to\infty$.
Therefore, by an appropriate ergodic theorem (see, e.g., \cite[Theorem~VIII.6.9]{DunfordSchwartz} and \cite[Theorem~2.1]{SznitmanAM}), 
we get  
\begin{equation}\label{eq:etaergodicity}
\lim_{L \to \infty} \frac{1}{L^d} 
\sum_{ x \in [0,L)^d} \mathds{1}\left( \,  x \in\mathcal V^u_L \, \right)
    \; \stackrel{\mathbb{P}\text{-a.s.}}{=} \; 
\lim_{L \to \infty} \frac{1}{L^d} 
\sum_{ x \in [0,L)^d} \mathds{1}\left( \,  x \leftrightarrow \infty\mbox{ in }\mathcal V^u \, \right)
    \; \stackrel{\mathbb{P}\text{-a.s.}}{=} \; \eta(u) .\
\end{equation}

\begin{definition}\label{def:good}
Let $u<\overline u$ and $L_0\geq 1$. 
We call $x\in \GG_0$ a good vertex if the following conditions are satisfied: 
\begin{itemize}
\item[(i)]
for all $e\in\{0,1\}^d$, the graph $\mathcal V^u_{L_0} \cap(x + eL_0 + [0,L_0)^d)$ contains 
a connected component with at least $\frac34 \eta(u) L_0^d$ vertices, and 
all these $2^d$ components are connected in $\mathcal V^u \cap(x+[0,2L_0)^d)$,
\item[(ii)]
for all $e\in\{0,1\}^d$, $|\mathcal V^u_{L_0} \cap (x + eL_0 + [0,L_0)^d)| \leq \frac 54 \eta(u) L_0^d$, 
\item[(iii)]
$(x+[0,2L_0)^d) \cap \mathcal B^\varepsilon = \emptyset$.
\end{itemize}
Otherwise we call $x$ a bad vertex. 
Note that the event $\{\text{$x$ is good}\}$ is measurable with respect to the 
$\sigma$-algebra 
generated by $\{\mathds{1}(y\in\mathcal V^u)~:~ y\in x+ [-L_0, 3L_0)^d\}$ and 
$\{\mathds{1}(z\in\mathcal B^\varepsilon)~:~z\in x+[0,2L_0)^d\}$. 
\end{definition} 
Definition~\ref{def:good} is similar to the definition of a good vertex in Section~\ref{sec:connectivity}, 
except that now we are dealing with $\mathcal V^u_{L_0}$, rather than with $\intgraph^u$. 
In particular, the event $\{\text{$x$ is good}\}$ pertains to the occupancy of the vertices of $\Z^d$ rather than the edges. 
The event in (i) corresponds to the event $E^u_x(\intgraph^u)$, 
the event in (ii) corresponds to the event $F^u_x(\intgraph^u)$, and the event in (iii) corresponds to 
the complement of the event $\overline D_x(\berbond^p)$. 
The role of the continuous function $m(u)$ in Definitions~\ref{def:Exu} and \ref{def:Fxu} 
is played by $\eta(u)$ (see \eqref{eq:etacont} and compare \eqref{eq:etaergodicity} to \eqref{ergodicity}).
The role of Lemma~\ref{l:connectivity} is played by the following lemma.
\begin{lemma}\label{l:connectivity:bigdiam}
Let $d\geq 3$, $0<u<\overline u$, and $\varepsilon>0$. 
There exist constants $c = c(d,u,\varepsilon)>0$ and $C=C(d,u,\varepsilon)<\infty$ such 
that for all $R\geq 1$, 
\begin{equation}\label{eq:connectivity:bigdiam}
\mathbb P\left[
\bigcap_{x,y\in \mathcal V^u_{\varepsilon R}\cap [0,R)^d} \left\{x\leftrightarrow y\quad\mbox{in}\quad
\mathcal V^u\cap [-\varepsilon R,(1+\varepsilon)R)^d\right\}\right] 
\geq 1 - C e^{-R^{c}} .\
\end{equation}
\end{lemma}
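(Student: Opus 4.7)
The plan is to prove Lemma~\ref{l:connectivity:bigdiam} by a renormalization argument entirely analogous to the proof of Lemma~\ref{l:Ex}, but with the seed events defined directly in terms of $\mathcal V^u$ using properties \eqref{eq:tildeu:1} and \eqref{eq:tildeu:2} of Definition~\ref{def:tildeu} in place of the ergodic estimate \eqref{ergodicity} and Lemma~\ref{l:connectivity}.

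First I would fix a renormalization length $n=\lceil \varepsilon R/K\rceil$ for a suitable large constant $K=K(d)$ and cover $[-\varepsilon R,(1+\varepsilon)R)^d$ by the grid $G = n\mathbb{Z}^d$. Call $z \in G$ \emph{good} if (i) there is a $\mathcal V^u$-path from $B(z,n/2)$ to $\partial B(z,2n)$, and (ii) any two connected subsets of $\mathcal V^u\cap B(z,2n)$ of diameter $\geq n/5$ are connected inside $\mathcal V^u\cap B(z,4n)$. Property~(i) holds with probability at least $1-Ce^{-n^c}$ by \eqref{eq:tildeu:1} (any infinite $\mathcal V^u$-path starting in $B(z,n/2)$ must cross $\partial B(z,2n)$), and (ii) holds with the same type of bound by \eqref{eq:tildeu:2}. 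A union bound over the $O(K^d\varepsilon^{-d})$ grid vertices $z$ whose boxes meet $[-\varepsilon R,(1+\varepsilon)R)^d$ then shows that every such $z$ is good with probability at least $1-C' e^{-R^{c'}}$, where $C',c'$ depend on $u$ and $\varepsilon$.

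Next I would show that, on the event that every grid vertex is good, the connectivity statement of the lemma holds deterministically. For $x\in\mathcal V^u_{\varepsilon R}\cap[0,R)^d$, let $z_x\in G$ be the closest grid vertex to $x$. The $\mathcal V^u$-component of $x$ has diameter at least $\varepsilon R\gg 2n$, hence there is a path in $\mathcal V^u$ from $x$ that leaves $B(z_x,2n)$; truncating this path at its first exit produces a connected subset $S_x$ of $\mathcal V^u\cap B(z_x,2n)$ containing $x$ with diameter at least $n$. Property~(ii) of goodness then joins $S_x$, inside $\mathcal V^u\cap B(z_x,4n)$, to the component from (i), which I call the \emph{central} component $D_{z_x}$. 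To link $D_{z_x}$ with $D_{z_y}$ I would chain along a $G$-path $z_x=w_0,\ldots,w_k=z_y$: for consecutive $w_i,w_{i+1}$, the components $D_{w_i}$ and $D_{w_{i+1}}$ have diameter $\geq n$ and both lie inside a single box of radius $O(n)$ centered at the midpoint of $w_i,w_{i+1}$, so one further application of \eqref{eq:tildeu:2} to that box (which should be included in the definition of goodness) connects them inside $\mathcal V^u$ within an $O(n)$ enlargement. Choosing $K$ large enough (depending only on $d$) ensures that the whole chain of connections lies in $[-\varepsilon R,(1+\varepsilon)R)^d$.

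The main obstacle is the deterministic construction of the ``local footprint'' $S_x$ of large diameter inside $B(z_x,2n)$: \emph{a priori}, $C_x\cap B(z_x,2n)$ could fragment into many small pieces, since connectedness of $C_x$ holds only in the full graph $\mathcal V^u$ and not in the restriction to a box. The resolution is to truncate a single path that exits $B(z_x,2n)$, giving a connected subset inside the box whose diameter is forced by the geometry. The remainder of the proof is careful bookkeeping of scales so that each application of \eqref{eq:tildeu:2} (which enlarges the box by a factor of $2$) still keeps the connecting paths inside $[-\varepsilon R,(1+\varepsilon)R)^d$; this is what fixes the value of $K$.
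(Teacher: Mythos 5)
Your proposal is correct and follows essentially the same route as the paper: a union bound over local events built from \eqref{eq:tildeu:1} and \eqref{eq:tildeu:2}, followed by a deterministic chaining argument that links the large-diameter local footprints of $x$ and $y$ through a sequence of overlapping boxes. The only (cosmetic) difference is that the paper centers its events at every lattice point of $[0,R)^d$ and chains along unit-step paths through annuli of width $\asymp\varepsilon R$, whereas you use a coarse grid of spacing $\asymp\varepsilon R/K$ with an extra midpoint-uniqueness event, which you correctly flag and incorporate into the definition of goodness.
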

\begin{proof}[Proof of Lemma~\ref{l:connectivity:bigdiam}]
It suffices to consider $R\geq 1$ such that $\varepsilon R\geq 10$. 
Let $k = \lfloor \varepsilon R/10\rfloor$. 
For $z\in [0,R)^d$, let $\mathcal A_z$ be the event that 
\begin{itemize}
\item[(a)]
$B(z,k)$ is connected to the boundary of $B(z,4k)$ in $\mathcal V^u$, and 
\item[(b)]
every two nearest-neighbor paths from $B(z,2k)$ to the boundary of $B(z,3k)$ in $\mathcal V^u$ 
are in the same connected component of $\mathcal V^u\cap B(z,6k)$. 
\end{itemize}
Let $\mathcal A = \cap_{z\in [0,R)^d}\mathcal A_z$. 
By \eqref{eq:tildeu:1} and \eqref{eq:tildeu:2}, 
there exist constants $\widetilde c = \widetilde c(d,u,\varepsilon)>0$ and $\widetilde C=\widetilde C(d,u,\varepsilon)<\infty$, 
such that for all $R$, we have 
\[
\mathbb P\left[\mathcal A\right]\geq 1 - \widetilde C e^{-R^{\widetilde c}} .\
\]
Therefore, it suffices to show that 
\begin{equation}\label{eq:Aimpliesconnect}
\text{the event $\mathcal A$ implies the event in \eqref{eq:connectivity:bigdiam}.} 
\end{equation}
Let $x,y\in \mathcal V^u_{\varepsilon R}\cap[0,R)^d$.
Let $\mathcal C_x$ and $\mathcal C_y$ be the connected components of $x$ and $y$ 
in $\mathcal V^u\cap[-\varepsilon R,(1+\varepsilon)R)^d$.
We will show that if $\mathcal A$ occurs then $\mathcal C_x = \mathcal C_y$. 
Note that by the choice of $x,y$ and $k$, $\mathcal C_x$ contains a path from $x$ to the boundary of $B(x,4k)$, 
and $\mathcal C_y$ contains a path from $y$ to the boundary of $B(y,4k)$.

Assume that $\mathcal A$ occurs. 
Take a nearest-neighbor path $\pi = (z_1,\ldots,z_t)$ in $[0,R)^d$ from $x$ to $y$. 
For each $1\leq i\leq t-1$, the occurrence of the events $\mathcal A_{z_i}$ and $\mathcal A_{z_{i+1}}$ implies that 
(a) there exist nearest-neighbor paths $\pi_1$ and $\pi_2$ in $\mathcal V^u$, 
$\pi_1$ from $B(z_i,k)$ to the boundary of $B(z_i,4k)$, and 
$\pi_2$ from $B(z_{i+1},k)$ to the boundary of $B(z_{i+1},4k)$, and 
(since both paths connect $B(z_i,2k)$ to the boundary of $B(z_i,3k)$)
(b) any two such paths are connected in $\mathcal V^u\cap B(z_i,6k)$. 
This implies that $\mathcal C_x$ and $\mathcal C_y$ must be connected in 
$\mathcal V^u\cap \cup_{i=1}^t B(z_i,6k) \subseteq \mathcal V^u\cap[-\varepsilon R,(1+\varepsilon)R)^d$. 
This finishes the proof of \eqref{eq:Aimpliesconnect} and of the lemma. 
\end{proof}

\bigskip

Using \eqref{eq:etacont}, \eqref{eq:etaergodicity}, and Lemma~\ref{l:connectivity:bigdiam}, 
we can proceed similarly to the proof of \eqref{eq:badpaths}
(see also the proofs of Corollaries \ref{cor:Ex} and \ref{cor:Fx} and Lemma~\ref{l:Hx}) to show that
for any $0<u<\overline u$, there exist $L_0\geq 1$, $c>0$ and $C<\infty$ such that for 
all $N$ divisible by $L_0$, we have 
\begin{equation}\label{eq:badpaths:vacantset}
\begin{split}
\mathbb P\left[
\begin{array}{c}
\text{$0$ is connected to the boundary of $B(0,N)$}\\
\text{by a $*$-path of bad vertices in $\GG_0$}
\end{array}
\right] 
\leq C e^{-N^c} .\
\end{split}
\end{equation}
We now use planar duality, similarly to the proof of Theorem~\ref{thm:slabs}, to show that \eqref{eq:badpaths:vacantset} implies
that for large enough $L_0$, 
\begin{equation}\label{eq:vacantset:infcluster}
\mathbb P\left[
\begin{array}{c}
\text{$0$ is connected to infinity}\\
\text{by a nearest-neighbor path of good vertices in $\GG_0$}
\end{array}
\right]>0 .\
\end{equation} 
Similarly to Lemma~\ref{l:fromG0toZd}, we observe that
if there exists an infinite nearest-neighbor path $\pi=(x_1,\ldots)$ of good vertices in $\GG_0$, 
then the set $\cup_{i=1}^\infty\left(x_i + [0,2L_0)^d\right)$ 
contains an infinite nearest-neighbor path of $\mathcal V^u\setminus \mathcal B^\varepsilon$.
This, together with \eqref{eq:vacantset:infcluster}, implies \eqref{stable_supercritical}.

\bigskip

We proceed with the proof of \eqref{stable_subcritical}. 
Let $u>u_{**}$, $L_0\geq 1$, and $\varepsilon \in (0, 1/L_0^{d+1})$. 
Recall that $\GG_0 = L_0 \Z^d$. 
We call $x\in \GG_0$ a bad vertex if either

(a) there exists a nearest-neighbor path in $\mathcal V^u$ from $B(x,L_0)$ to the boundary of $B(x,2L_0)$, \\ or 

(b) $\mathcal B^{\varepsilon}\cap B(x,2L_0)\neq \emptyset$.\\
With the above choice of $\varepsilon$, the probability of event in (b) goes to $0$ as $L_0\to\infty$. 

It follows from the definition of $u_{**}$ and the choice of $\varepsilon$ 
(similarly to the proof of \eqref{eq:badpaths}) that 
for any $u>u_{**}$, there exist $L_0\geq 1$, $c>0$ and $C<\infty$ such that for   
all $N$ divisible by $L_0$, we have 
\[
\begin{split}
\mathbb P\left[\text{$0$ is connected to the boundary of $B(0,N)$ by a $*$-path of bad vertices in $\GG_0$}\right] 
\leq C e^{-N^c} .\
\end{split}
\]
In particular, for any $u>u_{**}$ and large enough $L_0$, 
almost surely, there is no infinite nearest-neighbor cluster of bad vertices in $\GG_0$.
Finally, note that if $\pi$ is an infinite path in $\mathcal V^{u}\cup\mathcal B^{\varepsilon}$ from the origin, then 
the origin is in an infinite nearest-neighbor path of bad vertices in $\GG_0$. 
This implies \eqref{stable_subcritical}.
\qed

\paragraph{Acknowledgements.}
We thank A.-S. Sznitman for pointing out a connection between our results and  
questions of robustness to noise, which led to results discussed in Section~\ref{sec:vacantset}, 
and for a careful reading of the manuscript.

\end{document}